\def\sideremark#1{\ifvmode\leavevmode\fi\vadjust{\vbox to0pt{\vss
 \hbox to 0pt{\hskip\hsize\hskip1em
\vbox{\hsize2cm\small\raggedright\pretolerance10000
 \noindent #1\hfill}\hss}\vbox to8pt{\vfil}\vss}}}
\setlist[itemize]{leftmargin=*}
\DeclareMathAlphabet{\mathpzc}{OT1}{pzc}{m}{it}
\newcommand{\Lp}{L_{\phi}}
\newcommand{\Lse}{\mathcal{L}_s^\epsilon[\phi]}
\newcommand{\Ls}{\mathcal{L}_s[\phi]}
\newcommand{\dm}{\;\mathrm{d}\mu_s(t)}
\newcommand{\dmN}{\;\mathrm{d}\mu_s^N(z)}
\newcommand{\R}{\mathbb{R}}
\newcommand{\A}{\mathcal{A}}
\renewcommand{\epsilon}{\varepsilon}
\numberwithin{equation}{section}
\theoremstyle{plain}
\newtheorem{theorem}{Theorem}[section]
\newtheorem{lemma}[theorem]{Lemma}
\newtheorem{corollary}[theorem]{Corollary}
\newtheorem{proposition}[theorem]{Proposition}
\theoremstyle{definition}
\newtheorem{remark}[theorem]{Remark}
\newtheorem{definition}[theorem]{Definition}
\begin{document}
\title[Asymptotic expansions for the fractional infinity Laplacian]
{On asymptotic expansions for the fractional infinity Laplacian} 
\author{F\'elix del Teso, J\o rgen Endal, and Marta Lewicka}
\address{F. del Teso: Universidad Complutense de Madrid, Departamento de An\'alisis Matem\'atico y Matem\'atica
Aplicada, 28040 Madrid, Spain}  
\address{J. Endal: Universidad Aut\'onoma de Madrid, Departamento de Matem\'aticas, 28049 Madrid, Spain} 
\address{M. Lewicka: University of Pittsburgh, Department of
  Mathematics, 139 University Place, Pittsburgh, PA 15260, USA} 
\email{fdelteso@ucm.es, jorgen.endal@uam.es, lewicka@pitt.edu} 
	
\thanks{F. del Teso was partially supported by PGC2018-094522-B-I00 from the MICINN of the Spanish Government.\\ 
J. Endal received funding from the Research Council of Norway under the Toppforsk (research excellence) grant agreement no. 250070 ``Waves and Nonlinear Phenomena (WaNP)'', from the European Union’s Horizon 2020 research and innovation programme under the Marie Sk{\l}odowska-Curie grant agreement no. 839749 ``Novel techniques for quantitative behavior of convection-diffusion equations (techFRONT)'', and from the Research Council of Norway under the MSCA-TOPP-UT grant agreement no. 312021.\\
M. Lewicka was supported by NSF grants DMS-1613153 and DMS-2006439.}

\subjclass[2010]{
35B05,   	
35J60,   	
35D40,   	
47G20.   	
}

\keywords{Mean value formulas, nonlocal gradient dependent operators, fractional infinity Laplacian.}

\begin{abstract}
We propose two asymptotic expansions of two interrelated
integral-type averages, in the context of the fractional $\infty$-Laplacian
$\Delta_\infty^s$ for $s\in (\frac{1}{2},1)$. This operator has been introduced and first studied
in [Bjorland, C., Caffarelli, L. and Figalli, A., \emph{Nonlocal Tug-of-War and the inifnity fractional Laplacian}, Comm. Pure Appl. Math., {\bf 65}, pp. 337--380,  (2012)]. Our expansions are parametrised by the radius of the
removed singularity $\epsilon$, and allow for the identification of $\Delta_\infty^s\phi(x)$ as the
$\epsilon^{2s}$-order coefficient of the deviation of the $\epsilon$-average from
the value $\phi(x)$, in the limit $\epsilon\to
0+$. The averages are well posed for functions $\phi$ that are only Borel regular and bounded.
\end{abstract}

\maketitle

\section{Introduction}

This paper concerns the fractional $\infty$-Laplace
operator $\Delta_\infty^s$, as introduced in \cite{BCF} and represented by (\ref{inno}) below. Given a
function $\phi:\R^N\to\R$, our main result is the identification of $\Delta_\infty^s \phi(x)$
as the $\epsilon^{2s}$-order coefficient in the asymptotic expansion of the deviation
of an appropriate $\epsilon$-average $\mathcal{A}_\epsilon$ applied on $\phi$, from the value $\phi(x)$.
Such identification is of general interest in the analysis of partial differential
operators, their related probabilistic interpretation via Tug-of-War
games, a study of viscosity solutions and of numerical approximating schemes. The chief
example of the said asymptotic expansions is given by the well known (local and linear)
formula for the Laplace operator, where $\Delta \phi(x)$ emerges as the
$\epsilon^2$-order coefficient from the integral average $ \fint_{B_\epsilon}$:
\begin{equation}\label{aed2}
\fint_{B_\epsilon(x)}\phi(y)\;\mbox{d}y = \phi(x) +
\frac{\epsilon^2}{2(N+2)}\Delta \phi(x) + o(\epsilon^2) \qquad \mbox{ as }\; \epsilon\to 0+. 
\end{equation}
The parallel expansion of the $\infty$-Laplacian: $\Delta_\infty\phi(x) =
\big \langle \nabla^2 \phi(x) : \frac{\nabla\phi(x)}{|\nabla\phi(x)|}\otimes 
\frac{\nabla\phi(x)}{|\nabla\phi(x)|}\big\rangle$ utilizes the
midpoint (local and nonlinear) average $\frac{1}{2}(\sup_{B_\epsilon} + \inf_{B_\epsilon})$ in:
\begin{equation}\label{aed3}
\frac{1}{2} \big(\sup_{B_\epsilon(x)\normalcolor}\phi + \inf_{B_\epsilon(x)}\phi \big) = \phi(x) +
\frac{\epsilon^2}{2}\Delta_\infty \phi(x) + o(\epsilon^2) \qquad \mbox{ as }\; \epsilon\to 0+. 
\end{equation}

\subsection{The asymptotic expansions and averaging operators in this paper}
In what follows,  will prove that for every $s\in (\frac{1}{2},1)$ one
counterpart formula of (\ref{aed2}) for $\Delta_\infty^s$ is: 
\begin{equation}\label{jeden1}
\begin{split}
& \A_\epsilon^o \phi(x) = \phi(x) +
{s}\epsilon^{2s}\Delta_\infty^s\phi (x) + o(\epsilon^{2s}) \qquad \mbox{ as }\; \epsilon\to 0+,
\end{split}
\end{equation}
based on the following (nonlocal and nonlinear) average:
\begin{equation}\label{MVP1}
\A_\epsilon^o \phi(x) = \frac{1}{2}\Big(\sup_{|y|=1}\fint_\epsilon^\infty
\phi(x+ty)\dm + \inf_{|y|=1}\fint_\epsilon^\infty \phi(x+ty)\dm \Big).
\end{equation}
The one-dimensional fractional measure $\mu_s$ and the structure of
the error term $o(\epsilon^{2s})$ will be explained below.
When $\nabla\phi(x)\neq 0$, we also derive another identification through
a local-nonlocal average, which is a convex combination of
the averages used in (\ref{jeden1}) and (\ref{aed3}):
\begin{equation}\label{MVP2}
\A_\epsilon \phi(x) = (1-s) \cdot \A_\epsilon^o \phi(x) + s\cdot \frac{1}{2}
\big(\sup_{B_\epsilon(x)\normalcolor}\phi + \inf_{B_\epsilon(x)}\phi \big).
\end{equation}
We anticipate that the error quantity $o(\epsilon^{2s})$ below is
uniform in the whole considered range $s\in
(\frac{1}{2},1)$, whereas the corresponding error in (\ref{jeden1})
blows up to $\infty$ as $s\to 1-$.
Thus, the following asymptotic expansion can be seen as an improvement of (\ref{jeden1}):
\begin{equation}\label{dwa1}
\A_\epsilon\phi(x) = \phi(x) + (1-s)s\epsilon^{2s}
\Delta_\infty^s \phi(x) + o(\epsilon^{2s})\qquad \mbox{ as }\; \epsilon\to 0+.
\end{equation}
Precise statements of (\ref{jeden1}) and (\ref{dwa1})
will be given in Theorems \ref{jaszczu1}, \ref{jaszczu2} and Remarks
\ref{kura}, \ref{piesio}.

\subsection{The fractional $\infty$-Laplacian}
Let $\phi:\R^N\to\R$ be a bounded Borel function. We recall that
$\phi\in C^{1,1}(x)$ at $x\in\R^N$, provided that there exists $p_x\in \R^N$ and $C_x,\eta_x>0$ such that:
\begin{equation}\label{C11}
\big|\phi(x+y) - \phi(x) - \langle p_x, y\rangle\big|\leq C_x |y|^2
\qquad\mbox{for all }\; |y|<\eta_x.
\end{equation}
In \cite[Definition 1.1]{BCF},  the (normalized) fractional
$\infty$-Laplacian $\Delta_\infty^s\phi(x)$, for $s\in (\frac{1}{2}, 1)$, has been introduced by means of
two distinct formulas, distinguishing between cases $p_x=0$ and
$p_x\neq 0$.  In section \ref{sec2}, we provide a rigorous proof of the following alternative definition stated in \cite{BCF}: 
\begin{equation}\label{inno}
\Delta_\infty^s \phi(x)= \frac{1}{\alpha_s}\cdot\sup_{|y|=1}\inf_{|\tilde y|=1} \int_0^\infty \Lp(x,ty,t\tilde y)\dm.
\end{equation}
To explain the notation in the right hand side above, for each $x, y, \tilde y\in\R^N$ we define: 
$$\Lp(x, y, \tilde y) \doteq \phi(x+y) + \phi(x-\tilde y) - 2\phi(x).$$ 
Further, $\mu_s$ is the measure\footnote{The role of the normalizing constant $\alpha_s$ is to ensure that the 
operator $-(-\Delta)^su(x) \doteq \int_0^\infty L_u(x,t,t)\dm$ defined for 
$u:\R\to\R$, is a pseudo-differential operator with symbol
$|\xi|^{2s}$.} on the Borel subsets of $(0,\infty)$, given by:
$$\mathrm{d}\mu_s(t) \doteq \frac{\alpha_s}{t^{1+2s}}\;\mbox{d}t \quad \mbox{ where }\;
\alpha_s=\frac{4^s s\Gamma\big(\frac{1}{2}+s\big)}{\pi^{1/2}\Gamma\big(1-s\big)} =
\Big(2\int_0^\infty\frac{1-\cos t}{t^{1+2s}}\;\mbox{d}t\Big)^{-1} .$$
It is important to note \cite{hitch}  that one can express $\alpha_s$ by means of another constant\footnote{A direct
  calculation shows that, for example, $c_s\in ((\frac{12}{13})^2,
  (\frac{12}{5})^2)$ in the range $s\in (\frac{1}{2}, 1)$.} $c_s$, that
is bounded and positive, uniformly in $s$. Namely, there holds:
$$\alpha_s = s(1-s) c_s.$$

We also point out that the operator $\Delta_\infty^s$ treated in this
paper, is not the only nonlocal counterpart of $\Delta_\infty$. A
variational (i.e. energy based) fractional infinity Laplacian was
studied in \cite{CLM12} and a non normalized one in
\cite{CJ17}. These operators are not suited for a game theoretical
approach, which was the main motivation  in\cite{BCF}.

\subsection{Statements and discussion of main results}
We consider the operator in the right hand side of (\ref{inno}):
\begin{equation}\label{Ldefi1}
\Ls (x) \doteq\sup_{|y|=1}\inf_{|\tilde y|=1} \int_0^\infty \Lp(x,ty,t\tilde y)\dm.
\end{equation}
Given $\eta_x>0$, we work with the following hypotheses on
$\phi$, relative to the ball $B_{\eta_x}\doteq B_{\eta_x}(x)$: 
\begin{equation*}\label{H1}\tag{{\bf H}}
\left[\quad \;\; \mbox{\begin{minipage}{13.1cm}\vspace{1mm}
\begin{itemize}[leftmargin=*]
\item[(i)] $\phi\in C^2(\bar B_{\eta_x})$ where $p_x\doteq \nabla
\phi(x)$ and $C_x\doteq \frac{1}{2}\|\nabla^2\phi\|_{L^\infty(B_{\eta_x})}$.\vspace{1mm}
\item[(ii)] $\phi$ is bounded and uniformly continuous on
$\R^N\setminus \bar B_{\eta_x}$ with modulus of continuity
$\omega_\phi (a)\doteq\sup\big\{|\phi(y_1)-\phi(y_2)|;~ y_1,y_2\in \R^N\setminus
\bar B_{\eta_x},~ |y_1-y_2|\leq a\big\}.$\vspace{1mm}
\end{itemize}
\end{minipage}}\;\;\, \right]
\end{equation*}
Regularity required in (\ref{H1}) is  satisfied by the test functions in the viscosity solution setting (see section
\ref{sec51}). We further denote: 
\begin{equation*}
\begin{split}
& A_\epsilon = \max\Big\{\frac{16 C_x}{|p_x|}\cdot\frac{2s-1}{1-s}\cdot 
\frac{\eta_x^{2-2s}-\epsilon^{2-2s}}{\epsilon^{1-2s}
  -{\eta_x}^{1-2s}}, ~\kappa_\epsilon\Big\}, \\ &
\kappa_\epsilon = \sup\Big\{ a;~ a\in [0,2] \; \mbox{ and } \; a^2\leq
 \frac{8\omega_\phi(a)}{|p_x|}\cdot \frac{\frac{2s-1}{2s}
  \eta_x^{-2s} +\eta_x^{1-2s}}{\epsilon^{1-2s} - \eta_x^{1-2s}}\Big\}.
\end{split}
\end{equation*}

Our first main result regards the expansion (\ref{jeden1}):
\begin{theorem}\label{jaszczu1}
Let $\phi:\R^N\to\R$ satisfy (\ref{H1}).  Then there holds, for every $\epsilon< \eta_x$:
\begin{equation*}
\begin{split}
\Big|\A_\epsilon^o&\phi(x) - \phi(x) -
\frac{1}{c_s(1-s)}\epsilon^{2s}\Ls (x) \Big|\leq  \frac{s}{1-s}\cdot
C_x \epsilon^2 + \\ & + \left\{\begin{array}{ll} \epsilon^{2s}\Big(4
sC_x\frac{\eta_x^{2-2s}-\epsilon^{2-2s}}{1-s}\cdot A_\epsilon
+ \big(\eta_x^{-2s} + \frac{2s}{2s-1}\eta_x^{1-2s}\big)\cdot \omega_\phi(A_\epsilon)\Big)
& \mbox{ when }\; p_x\neq 0\\
0 & \mbox{ when }\; p_x= 0. \end{array}\right.
\end{split}
\end{equation*}
\end{theorem}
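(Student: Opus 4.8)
The plan is to estimate the deviation $\A_\epsilon^o\phi(x) - \phi(x)$ by comparing, for each fixed direction $y$ with $|y|=1$, the one-dimensional integral $\fint_\epsilon^\infty \phi(x+ty)\dm$ against its truncated analogue where the singularity at $t=0$ is replaced and the operator $\Ls(x)$ is reconstructed. The starting point is the algebraic identity
\begin{equation*}
\fint_\epsilon^\infty \phi(x+ty)\dm = \phi(x) + \frac{1}{2}\int_0^\infty \big(\phi(x+ty)+\phi(x-ty)-2\phi(x)\big)\dm - \frac{1}{2}\int_0^\epsilon \big(\phi(x+ty)+\phi(x-ty)-2\phi(x)\big)\dm + \text{(odd part)},
\end{equation*}
where the "odd part" collects the terms that change sign under $y\mapsto -y$; it is here that the $\sup/\inf$ symmetrization in $\A_\epsilon^o$ will be used. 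First I would open up $\fint_\epsilon^\infty = \big(\int_\epsilon^\infty \dm\big)^{-1}\int_\epsilon^\infty = \frac{\epsilon^{2s}}{C_s}\int_\epsilon^\infty$, using $\int_\epsilon^\infty t^{-1-2s}C_s\,\mathrm{d}t = \epsilon^{-2s}$, which explains the factor $\epsilon^{2s}$ and, via $C_s = c_s s(1-s)$, the constant $\frac{1}{c_s(1-s)}$ in front of $\Ls(x)$. Averaging the identity over $\pm y$ and taking $\frac12(\sup+\inf)$ should, after handling the odd contribution, produce the main term $\frac{\epsilon^{2s}}{C_s}\cdot s\cdot C_s\cdot(\text{something converging to }\Ls(x))$ together with error terms.

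The heart of the matter is to control three separate pieces. (1) The \emph{near-singularity truncation} $\int_0^\epsilon L_\phi(x,ty,ty)\dm$: on $B_{\eta_x}$ we have $\phi\in C^2$, so $|L_\phi(x,ty,ty)|\le 2C_x t^2$ by \eqref{C11}-type bounds, and $\int_0^\epsilon t^2 \cdot t^{-1-2s}\,\mathrm{d}t = \frac{\epsilon^{2-2s}}{2-2s}$, which upon multiplication by $\frac{\epsilon^{2s}}{C_s}$ and recalling $C_s = s(1-s)c_s$ yields exactly a term of size $\frac{s}{1-s}C_x\epsilon^2$ — matching the first summand on the right-hand side of the claimed estimate. (2) The \emph{far-field error}: the tail $\int_{\eta_x}^\infty$ must be controlled using hypothesis (ii), i.e. $|L_\phi(x,ty,t\tilde y)|\le 2\omega_\phi(\text{diam})$ bounds plus boundedness of $\phi$; this generates the terms involving $\omega_\phi(A_\epsilon)$ and the factors $\eta_x^{-2s}$, $\frac{2s}{2s-1}\eta_x^{1-2s}$, after integrating $t^{-1-2s}$ over $[\eta_x,\infty)$. (3) The \emph{directional mismatch}: in $\A_\epsilon^o$ one takes $\sup$ and $\inf$ of the \emph{same-direction} average $\fint_\epsilon^\infty\phi(x+ty)\dm$, whereas $\Ls(x)$ uses $\sup_y\inf_{\tilde y}$ over \emph{two independent} directions $ty$, $t\tilde y$. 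I expect this to be the main obstacle. The resolution, when $p_x=\nabla\phi(x)\ne 0$, is that the optimal directions for both $\sup$ and $\inf$ are essentially $\pm p_x/|p_x|$ up to a correction of order $C_x/|p_x|$ on $B_{\eta_x}$ and $\omega_\phi/|p_x|$ in the far field — this is precisely what the quantities $A_\epsilon$ and $\kappa_\epsilon$ quantify. One shows that restricting both the $\sup_y$ and the $\inf_{\tilde y}$ to a cap of angular width $\sim A_\epsilon$ around $\pm p_x/|p_x|$ changes the value of the relevant integrals by at most the displayed error; inside such a cap the two-direction and one-direction averages differ by a controlled amount because $\phi$ is nearly linear there. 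When $p_x=0$ the gradient term vanishes identically and the symmetrization is exact, giving the error bound $0$ beyond the $\frac{s}{1-s}C_x\epsilon^2$ truncation term — which is why the case split appears.

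Concretely, the steps in order are: (a) record the exact normalization $\int_\epsilon^\infty \dm = \epsilon^{-2s}$ and rewrite $\A_\epsilon^o\phi(x)-\phi(x)$ as $\frac{\epsilon^{2s}}{C_s}$ times a symmetrized double integral minus the symmetrized truncation integral on $(0,\epsilon)$; (b) bound the truncation integral by $2C_x\int_0^\epsilon t^{1-2s}\,\mathrm{d}t$ and simplify using $C_s = s(1-s)c_s$ to get the $\frac{s}{1-s}C_x\epsilon^2$ term; (c) if $p_x=0$, observe $L_\phi(x,ty,t\tilde y)$ is even in the leading order and the symmetrization makes the $\sup$-$\inf$ pair collapse to $\Ls(x)$ exactly, finishing this case; (d) if $p_x\ne 0$, localize both optimizing directions to an $A_\epsilon$-cap around $\pm p_x/|p_x|$ — justifying this localization via the $C^2$ bound on $B_{\eta_x}$ (contributing $\sim C_x\frac{\eta_x^{2-2s}-\epsilon^{2-2s}}{1-s}A_\epsilon$) and the modulus-of-continuity bound on the complement (contributing $\sim(\eta_x^{-2s}+\frac{2s}{2s-1}\eta_x^{1-2s})\omega_\phi(A_\epsilon)$); (e) on this cap, compare the one-direction average in $\A_\epsilon^o$ with the two-direction integrand of $\Ls(x)$, absorbing the difference into the same error terms; (f) collect all contributions. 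The only genuinely delicate estimate is (d)–(e), where one must argue that the $\sup_{|y|=1}$ in $\A_\epsilon^o$ and the nested $\sup_y\inf_{\tilde y}$ in $\Ls(x)$ select nearly antipodal directions and that the definitions of $A_\epsilon$ and $\kappa_\epsilon$ are exactly calibrated so that the angular deviation of these optimizers is $\le A_\epsilon$; everything else is integration of powers of $t$.
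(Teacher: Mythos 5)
Your outline is essentially the paper's own argument: the $\frac{s}{1-s}C_x\epsilon^2$ term is the $(0,\epsilon)$-truncation controlled by the $C^2$ bound, and the remaining terms come from showing that the near-optimal directions (both of the truncated one-dimensional averages and of $\Ls(x)$ itself, cf.\ Proposition~\ref{prop2}) deviate from $\pm p_x/|p_x|$ by at most $A_\epsilon$, with the near-field $C^2$ bound and the far-field modulus of continuity producing exactly the two displayed error terms (Proposition~\ref{kokon}, Corollary~\ref{cor4.5}). Two small corrections: $\int_\epsilon^\infty\dm=\frac{C_s}{2s}\epsilon^{-2s}$, not $\epsilon^{-2s}$ (the missing $\frac{1}{2s}$ is what makes the coefficient $\frac{1}{c_s(1-s)}=\frac{s}{C_s}$ come out after taking $\frac12(\sup+\inf)$), and the ``directional mismatch'' you single out as the main obstacle is in fact no obstacle at all: since $\mu_s(\epsilon,\infty)<\infty$, the truncated $\sup_{|y|=1}\inf_{|\tilde y|=1}$ decouples additively (second line of (\ref{Ldefi})), so $\A_\epsilon^o\phi(x)-\phi(x)=\frac{s\epsilon^{2s}}{C_s}\Lse(x)$ is an exact identity and the entire error is the comparison of $\Lse(x)$ with $\Ls(x)$, in both cases $p_x\neq0$ and $p_x=0$.
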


Our second main result regards the expansion (\ref{dwa1}):
\begin{theorem}\label{jaszczu2}
Assume (\ref{H1}) and $p_x\neq 0$.
Then, for all $\epsilon< \eta_x$ with $\epsilon |\nabla^2\phi(x)|\leq |p_x|$, we have:
\begin{equation*}
\begin{split}
\Big|\A_\epsilon\phi(x) &- \phi(x) - \frac{1}{c_s}\epsilon^{2s}\Ls (x)
\Big|\\ & \leq 
2\epsilon^{2s}\Big(2sC_x\big(\eta_x^{2-2s}-\epsilon^{2-2s}\big)A_\epsilon
+ \Big(\frac{\eta_x^{-2s}}{2}+\frac{s\eta_x^{1-2s}}{2s-1}\Big)\cdot (1-s) \omega_\phi(A_\epsilon)\Big)
\\ & \quad + 2s\epsilon^3\frac{|\nabla^2\phi(x)|^2}{|p_x|}
+ s\epsilon^2\sup_{y\in B_\epsilon(x)}\big|\nabla^2\phi(y)-\nabla^2\phi(x)\big|.
\end{split}
\end{equation*}
\end{theorem}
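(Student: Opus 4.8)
The plan is to reduce the statement to the expansion of $\A_\epsilon^o$ underlying Theorem \ref{jaszczu1}, combined with a quantitative version of the classical midpoint expansion (\ref{aed3}) for the local operator $\Delta_\infty$, exploiting crucially that the two resulting $\epsilon^2$-order coefficients --- both proportional to $\Delta_\infty\phi(x)$ --- cancel against each other under the weights $(1-s)$ and $s$ of (\ref{MVP2}). Set $e\doteq\frac{\nabla\phi(x)}{|\nabla\phi(x)|}$. Since $p_x\neq0$, the rigorous treatment of (\ref{inno}) in section \ref{sec2} gives $\Ls(x)=\int_0^\infty\Lp(x,te,te)\dm$; and unfolding (\ref{MVP1}) through the elementary identity $\sup_yA_y+\inf_{\tilde y}B_{\tilde y}=\sup_y\inf_{\tilde y}(A_y+B_{\tilde y})$ (valid when $A_y$ and $B_{\tilde y}$ depend on independent variables), the substitution $\tilde y=-y$, and the total mass $\int_\epsilon^\infty\dm=\frac{C_s}{2s}\epsilon^{-2s}$, one obtains $\A_\epsilon^o\phi(x)-\phi(x)=\frac{1}{c_s(1-s)}\epsilon^{2s}\Lse(x)$, with $\Lse(x)$ denoting (\ref{Ldefi1}) with the lower limit $0$ replaced by $\epsilon$. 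Recalling $C_s=s(1-s)c_s$, this yields
\[
\A_\epsilon\phi(x)-\phi(x)-\tfrac{1}{c_s}\epsilon^{2s}\Ls(x)
=\tfrac{1}{c_s}\epsilon^{2s}\big(\Lse(x)-\Ls(x)\big)
+s\Big(\tfrac12\big(\sup_{B_\epsilon(x)}\phi+\inf_{B_\epsilon(x)}\phi\big)-\phi(x)\Big).
\]

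I would then refine the argument inside the proof of Theorem \ref{jaszczu1}, which I expect to yield the decomposition $\Lse(x)-\Ls(x)=-\int_0^\epsilon\Lp(x,te,te)\dm+\rho_\epsilon$ with an extremal-direction remainder $\rho_\epsilon$ controlled so that $\frac{1}{c_s(1-s)}\epsilon^{2s}|\rho_\epsilon|$ is bounded by the $A_\epsilon$- and $\omega_\phi(A_\epsilon)$-dependent part of the bound in Theorem \ref{jaszczu1}, and which estimates the principal part crudely via $|\Lp(x,te,te)|\le 2C_xt^2$ (the origin of the $\frac{s}{1-s}C_x\epsilon^2$ term there). Instead I would Taylor-expand $\Lp(x,te,te)=t^2\Delta_\infty\phi(x)+r(t)$ with $|r(t)|\le t^2\sup_{B_\epsilon(x)}|\nabla^2\phi-\nabla^2\phi(x)|$ for $t\le\epsilon$, and integrate using $\int_0^\epsilon t^2\dm=\frac{C_s\epsilon^{2-2s}}{2-2s}$ together with $\frac{C_s}{c_s(1-s)(2-2s)}=\frac{s}{2(1-s)}$, to get
\[
(1-s)\big(\A_\epsilon^o\phi(x)-\phi(x)\big)=\tfrac{1}{c_s}\epsilon^{2s}\Ls(x)-\tfrac{s}{2}\epsilon^2\Delta_\infty\phi(x)+E_1,
\]
where $|E_1|$ is at most $(1-s)$ times the $A_\epsilon$/$\omega_\phi(A_\epsilon)$-dependent part of Theorem \ref{jaszczu1} --- which is precisely the $\epsilon^{2s}(\cdots)$-summand in the bound of Theorem \ref{jaszczu2} --- plus the term $\frac{s}{2}\epsilon^2\sup_{B_\epsilon(x)}|\nabla^2\phi-\nabla^2\phi(x)|$ coming from $r$.

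For the midpoint term I would invoke the quantitative form of (\ref{aed3}): under (\ref{H1})(i) with $p_x\neq0$ and $\epsilon|\nabla^2\phi(x)|\le|p_x|$,
\[
\tfrac12\big(\sup_{B_\epsilon(x)}\phi+\inf_{B_\epsilon(x)}\phi\big)-\phi(x)=\tfrac{\epsilon^2}{2}\Delta_\infty\phi(x)+E_2,\qquad
|E_2|\le 2\epsilon^3\tfrac{|\nabla^2\phi(x)|^2}{|p_x|}+\tfrac{\epsilon^2}{2}\sup_{B_\epsilon(x)}\big|\nabla^2\phi-\nabla^2\phi(x)\big|;
\]
the hypothesis $\epsilon|\nabla^2\phi(x)|\le|p_x|$ of Theorem \ref{jaszczu2} is exactly what guarantees that, up to second order, the extrema of $\phi$ over $\bar B_\epsilon(x)$ are attained near $x\pm\epsilon e$, which makes this expansion quantitatively sharp (if it is not available off the shelf, it follows by locating the maximum and minimum of $t\mapsto\phi(x+te)$ on $[-\epsilon,\epsilon]$ by $C^2$-Taylor and comparing with the full ball). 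Substituting the last two displays into the identity of the first paragraph, the contributions $-\frac{s}{2}\epsilon^2\Delta_\infty\phi(x)$ and $s\cdot\frac{\epsilon^2}{2}\Delta_\infty\phi(x)$ cancel, so that $\A_\epsilon\phi(x)-\phi(x)-\frac{1}{c_s}\epsilon^{2s}\Ls(x)=E_1+sE_2$; the triangle inequality, together with $\frac{s}{2}\epsilon^2\sup(\cdots)+s\cdot\frac{\epsilon^2}{2}\sup(\cdots)=s\epsilon^2\sup(\cdots)$, then reproduces verbatim the right-hand side of Theorem \ref{jaszczu2}.

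The step I expect to be the main obstacle is the refinement in the second paragraph: one must check that replacing the crude bound $|\Lp(x,te,te)|\le 2C_xt^2$ by the exact second-order term $t^2\Delta_\infty\phi(x)$ does not disturb the (delicate) control of the optimal directions $y,\tilde y$ in $\Lse(x)$, i.e.\ that in the proof of Theorem \ref{jaszczu1} the principal part $-\int_0^\epsilon\Lp(x,te,te)\dm$ and the remainder $\rho_\epsilon$ are genuinely decoupled. By contrast, matching the constants in the quantitative midpoint expansion with those in the statement is a routine, self-contained computation.
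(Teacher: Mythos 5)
Your proposal is correct and follows essentially the same route as the paper: the paper's proof combines Proposition \ref{kokon} (the extremal-direction estimate, which is already decoupled from the singular part exactly as you hoped, so the obstacle you flag does not arise), a Taylor expansion of $\int_0^\epsilon \Lp\big(x,t\frac{p_x}{|p_x|},t\frac{p_x}{|p_x|}\big)\dm$, and the quantitative midpoint estimate (\ref{otto}) quoted from \cite{LewB}, packaged into Proposition \ref{grazyna} and then summed and rescaled by $\frac{(1-s)s}{C_s}\epsilon^{2s}$. Your explicit cancellation of the $\frac{s}{2}\epsilon^2\Delta_\infty\phi(x)$ terms is just the unpacked form of that proposition, and your constants match the stated bound.
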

For a discussion of the error terms in the above results, we refer to Remarks \ref{kura} and
\ref{piesio}. In particular, for $\phi$ Lipschitz, the
 bound in Theorem \ref{jaszczu1} becomes: $O(\epsilon^{4s-1}+\epsilon^{2})$ with constants that blow up as
$s\to1-$. On the other hand, the bound in Theorem \ref{jaszczu2} has
the form: $O(\epsilon^{4s-1}) +  o(\epsilon^2)$ however the related
constants are uniform in $s\in(\frac{1}{2},1)$, and it also is
compatible with the expected error bound for the (local) $\infty$-Laplacian. This improvement is obtained
by correcting the singular part of $\Delta_\infty^s$ by the
corresponding asymptotic expansion of its local counterpart. Such
idea was already present in the numerical analysis literature, where
it was used to obtain higher order monotone numerical schemes for the
fractional Laplacian and other linear nonlocal operators, see
e.g.\cite{DEJ18, DEJ19}. The fact that the singular part of a
nonlinear and nonlocal operator encodes a local counterpart 
is an idea also present in \cite{BS2,DL20}. 

Other asymptotic expansions for nonlocal operators such as
$\Delta_\infty^s$ have been recently introduced  in
\cite{BS1, BS2}. The related average in \cite[section 3.2]{BS2} distinguishes
between the cases $p_x\neq 0$ and $p_x=0$. In comparison,
$\mathcal{A}_\epsilon$ in the present paper neither
relies on this distinction nor even necessitates the notion of the
gradient being well posed. Thus, they can be applied on a larger class of functions $\phi$ that are
only bounded Borel. 

In section \ref{sec52}, we further propose a version $\bar{\mathcal{A}}_\epsilon^o$ of the average
$\mathcal{A}_\epsilon^o$ and its corresponding expansion, in which integration takes place on an open, bounded
domain in $\R^N$, rather than an infinite line. We believe that this
correction will be of importance in the implementation of
numerical schemes. We also conjecture that the expected values of the
stochastic process whose dynamic
programming principle is modeled on $\bar{\mathcal{A}}^o_\epsilon$ converge
to these solutions in the limit $\epsilon\to 0+$, as in the
pivotal study \cite{PSSW} of  the classical operator $\Delta_\infty$.

\subsection*{Outline of the paper}
We prove (\ref{inno}) in section
\ref{sec2}, Theorem \ref{jaszczu1} in section \ref{sec3}, and Theorem
\ref{jaszczu2} in section \ref{sec4}. In section \ref{sec5}, we
discuss $\bar{\mathcal{A}}_\epsilon^o$ and  put Theorems
\ref{jaszczu1}, \ref{jaszczu2} in a viscosity solution framework.

\section{The fractional $\infty$-Laplacian and a proof of (\ref{inno})}\label{sec2}
 
Given a bounded Borel function $\phi:\R^N\to\R$, and two parameters
$\epsilon>0$ and $s\in (\frac{1}{2},1)$, we will be concerned
with values of the integral operators $\mathcal{L}_s^\epsilon[\phi]:\R^N\to\R$, given in:
\begin{equation}\label{Ldefi}
\begin{split}
\Lse (x) & \doteq\sup_{|y|=1}\inf_{|\tilde y|=1} \int_\epsilon^\infty \Lp(x,ty,t\tilde y)\dm\\
& = \sup_{|y|=1}\int_\epsilon^\infty\phi(x+ty)\dm + 
\inf_{| y|=1} \int_\epsilon^\infty\phi(x+ty)\dm - \frac{(1-s)c_s}{\epsilon^{2s}}\phi(x),\\
\end{split}
\end{equation}
Note that, since the restriction of $\phi$ to any
one-dimensional line is also Borel, the function $(0,\infty)\ni
t\mapsto \Lp(x,ty,t\tilde y)$ is bounded and Borel for any $x,y,\tilde y$. 
Further, since $\mu_s(\epsilon, \infty) = \frac{\alpha_s}{2s\epsilon^{2s}}<\infty$, each integral
$\int_\epsilon^\infty \Lp(x, ty, t\tilde y)\dm$ and consequently also the quantities
$\Lse (x)$, are all well defined and finite. On the other hand,
$\mu_s(0,\infty)=\infty$, so neither the definition of $\Ls$ 
in (\ref{Ldefi1}) nor a version of its equivalent formulation as in $\Lse$ are 
necessarily valid, when $\phi$ is only bounded and Borel. 
However, one immediate consequence of (\ref{C11}) is that:
\begin{equation}\label{C2}
\big|\Lp (x,ty,t\tilde y) - t\langle p_x, y-\tilde y\rangle \big|\leq 2C_x t^2
\qquad\mbox{for all }\; |y|, |\tilde y|=1\mbox{ and } |t|<\eta_x.
\end{equation}
which yields (through an application of Taylor's expansion):
\begin{lemma}\label{lem1}
Let $\phi\in C^{1,1}(x)$ be a bounded Borel function. Then $\{\Lse (x)\}_{\epsilon>0}$
are bounded independently of $\epsilon$ and $\Ls(x)$ is well
defined. More precisely, for all $\epsilon<\eta_x$ there holds:
\begin{equation}\label{uno}
|\Lse (x)|, \; |\Ls(x)| \leq 2c_s (1-s)\|\phi\|_{L^\infty}\eta_x^{-2s} + c_s s \cdot C_x\eta_x^{2-2s}.
\end{equation}
\end{lemma}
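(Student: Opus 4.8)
The plan rests on two observations: the integrand $\Lp(x,ty,t\tilde y)=\phi(x+ty)+\phi(x-t\tilde y)-2\phi(x)$ \emph{separates} into a function of $y$ plus a function of $\tilde y$, and the specialization $\tilde y=y$ annihilates the linear term $t\langle p_x,y-\tilde y\rangle$, after which (\ref{C2}) forces $|\Lp(x,ty,ty)|\le 2C_xt^2$ on $(0,\eta_x)$. Write $M\doteq 2c_s(1-s)\|\phi\|_{L^\infty}\eta_x^{-2s}+c_ssC_x\eta_x^{2-2s}$ for the claimed right-hand side of (\ref{uno}).

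For the bound on $\Lse(x)$ with $0<\epsilon<\eta_x$, set $g(y)\doteq\int_\epsilon^\infty\phi(x+ty)\dm$ for $|y|=1$; these are finite since $\mu_s(\epsilon,\infty)<\infty$, and by the second line of (\ref{Ldefi}) one has $\Lse(x)=\sup_{|y|=1}g(y)+\inf_{|y|=1}g(y)-2\phi(x)\mu_s(\epsilon,\infty)$. Since $\inf_{|z|=1}g(z)\le g(-y)\le\sup_{|z|=1}g(z)$ for every unit $y$, the separable structure gives
\[
\inf_{|y|=1}\int_\epsilon^\infty\Lp(x,ty,ty)\dm\;\le\;\Lse(x)\;\le\;\sup_{|y|=1}\int_\epsilon^\infty\Lp(x,ty,ty)\dm,
\]
so $|\Lse(x)|\le\sup_{|y|=1}\big|\int_\epsilon^\infty\Lp(x,ty,ty)\dm\big|$. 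It remains to bound a single such integral: splitting $(\epsilon,\infty)$ at $\eta_x$, on $(\epsilon,\eta_x)$ one uses $|\Lp(x,ty,ty)|\le 2C_xt^2$ together with $2C_x\int_0^{\eta_x}t^2\dm=c_ssC_x\eta_x^{2-2s}$ (computed from $C_s=s(1-s)c_s$ and $\int_0^{\eta_x}t^{1-2s}\,\mathrm{d}t=\frac{\eta_x^{2-2s}}{2-2s}$), while on $(\eta_x,\infty)$ one uses $|\Lp(x,ty,ty)|\le 4\|\phi\|_{L^\infty}$ and $\mu_s(\eta_x,\infty)=\frac{(1-s)c_s}{2}\eta_x^{-2s}$ to get the contribution $2c_s(1-s)\|\phi\|_{L^\infty}\eta_x^{-2s}$. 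Adding the two yields $|\Lse(x)|\le M$.

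For $\Ls(x)$ one repeats this with $\epsilon=0$. The same splitting shows $\int_0^\infty\Lp(x,ty,ty)\dm$ converges absolutely (the near-origin part is $O(t^2)$) and still satisfies $\big|\int_0^\infty\Lp(x,ty,ty)\dm\big|\le M$; since $\inf_{|\tilde y|=1}\int_0^\infty\Lp(x,ty,t\tilde y)\dm\le\int_0^\infty\Lp(x,ty,ty)\dm$, taking $\sup_{|y|=1}$ gives $\Ls(x)\le M$. For the matching lower bound I distinguish two cases. If $p_x=0$, then $\langle p_x,y-\tilde y\rangle=0$ always, so every $\int_0^\infty\Lp(x,ty,t\tilde y)\dm$ is finite with absolute value $\le M$ (by the split estimate, now with vanishing linear part), whence $\Ls(x)\ge-M$. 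If $p_x\ne 0$, the individual integrals may diverge: by (\ref{C2}) the integrand equals $t\langle p_x,y-\tilde y\rangle$ up to an $O(t^2)$ error near $0$, so $\int_0^\infty\Lp(x,ty,t\tilde y)\dm$ is $+\infty$, $-\infty$, or finite according to whether $\langle p_x,y-\tilde y\rangle$ is positive, negative, or zero. Taking $y_0=p_x/|p_x|$ one has $\langle p_x,y_0-\tilde y\rangle=|p_x|-\langle p_x,\tilde y\rangle>0$ for every unit $\tilde y\ne y_0$, so $\inf_{|\tilde y|=1}\int_0^\infty\Lp(x,ty_0,t\tilde y)\dm$ is attained at $\tilde y=y_0$ and equals the finite number $\int_0^\infty\Lp(x,ty_0,ty_0)\dm\ge-M$; hence $\Ls(x)\ge-M$. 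This simultaneously shows that $\Ls(x)$ is well defined and finite.

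The step I expect to demand the most care is this last one: when $p_x\ne 0$ the $\sup$–$\inf$ in (\ref{Ldefi1}) ranges over integrals taking the extended values $\pm\infty$, and one must verify that its value is controlled solely by the genuinely convergent (symmetric) integrals along directions $y,\tilde y$ with $\langle p_x,y-\tilde y\rangle=0$ — which is precisely the content of the case analysis above. The remaining ingredients (the separable reduction and the split estimate of a single integral) are routine.
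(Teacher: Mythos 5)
Your proof is correct, and it reaches the paper's exact bound by a somewhat different reduction. Where the paper compares the full sup--inf to the sup--inf of the linearized model, i.e.\ subtracts $\sup_{|y|=1}\inf_{|\tilde y|=1}\int_\epsilon^{\eta_x}t\langle p_x,y-\tilde y\rangle\dm=0$ and invokes the inequality bounding a difference of sup--inf's by the supremum of the pointwise differences, you instead exploit the separable form of $\Lse(x)$ (second line of (\ref{Ldefi})) to sandwich it between $\inf_{|y|=1}$ and $\sup_{|y|=1}$ of the diagonal integrals $\int_\epsilon^\infty\Lp(x,ty,ty)\dm$, on which the linear term has already cancelled; the final splitting at $\eta_x$ with $|\Lp(x,ty,ty)|\le 2C_xt^2$ near the origin and $4\|\phi\|_{L^\infty}$ in the tail is then the same computation as in the paper, and your constants check out. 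For $\Ls(x)$ the paper only says the bound ``follows similarly,'' whereas you spell out the extended-real-valued case analysis: when $p_x\neq0$ the off-diagonal integrals along $y_0=p_x/|p_x|$ diverge to $+\infty$ because $\int_0^{\eta_x}t\dm=\infty$ for $s>\frac12$, so the inner infimum is controlled by the finite diagonal value; this is essentially the mechanism of the paper's Proposition \ref{prop2}, imported here to justify both finiteness and the lower bound. What your route buys is that well-definedness of $\Ls(x)$ (with integrals possibly equal to $\pm\infty$) is made explicit rather than left implicit; what the paper's route buys is a single two-line comparison argument that treats all $y,\tilde y$ at once and foreshadows the ``subtract the local linear model'' device reused in Propositions \ref{prop2} and \ref{kokon}. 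Either way the statement and the constant in (\ref{uno}) are fully justified.
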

\begin{proof}
We use (\ref{C2}) to obtain, for any $|y|=|\tilde y|=1$:
\begin{equation*}
\begin{split}
\Big|\int_\epsilon^\infty \Lp(x, ty, t\tilde y)&\dm -
\int_\epsilon^{\eta_x} t\langle p_x, y-\tilde y\rangle \dm\Big| \leq \int_{\eta_x}^\infty 4\|\phi\|_{L^\infty}\dm
+ \int_\epsilon^{\eta_x}2 C_xt^2\dm \\ & = 2\frac{\alpha_s}{s\eta_x^{2s}}\|\phi\|_{L^\infty} +
\alpha_sC_x\frac{\eta_x^{2-2s}-\epsilon^{2-2s}}{1-s}.
\end{split}
\end{equation*}
On the other hand:
$$\sup_{|y|=1}\inf_{|\tilde y|\normalcolor =1} \int_\epsilon^{\eta_x} t\langle p_x, y-\tilde y\rangle \dm
= \int_\epsilon^{\eta_x} t\dm\cdot\sup_{|y|=1}\inf_{|\tilde y|\normalcolor=1} \langle p_x, y-\tilde y\rangle =0.$$
This results in:
\begin{equation*}
\begin{split}
|\Lse (x) | & = \Big|\Lse (x) - \sup_{|y|=1}\inf_{|\tilde y|\normalcolor =1}
\int_\epsilon^{\eta_x} t\langle p_x, y-\tilde y\rangle \dm\Big | \\ & \leq
\sup_{|y|=|\tilde y|=1} \Big|\int_\epsilon^\infty \Lp(x, ty, t\tilde y)\dm -
\int_\epsilon^{\eta_x} t\langle p_x, y-\tilde y\rangle \dm\Big|,
\end{split}
\end{equation*}
which proves the bound for $|\Lse (x) |$. The bound for $|\Ls (x)|$ follows similarly.
\end{proof}

Our proofs throughout the paper largely depend on analyzing the
behaviour of approximate extremizers $y, \tilde y$ in the definition (\ref{Ldefi})
We now observe that for the operator $\Ls$ these extremizers are
explicit, for a generic function $\phi$.

\begin{proposition}\label{prop2}
Let $\phi\in C^{1,1}(x)$ be a bounded Borel function such that $p_x\neq 0$. Then:
\begin{equation}\label{eq:equivdefifl}
\Ls(x) = \int_0^\infty \Lp\big(x, t\frac{p_x}{|p_x|}, t\frac{p_x}{|p_x|}\big)\dm
\end{equation}
\end{proposition}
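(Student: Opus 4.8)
The plan is to exploit that, for $s\in(\tfrac12,1)$, the weight $t^{-1-2s}$ of $\mu_s$ is \emph{not} integrable against $t\mapsto t$ near the origin: one has $\int_0^{\eta_x}t\,\mathrm{d}\mu_s(t)=C_s\int_0^{\eta_x}t^{-2s}\,\mathrm{d}t=+\infty$ precisely because $2s>1$. Hence, by the first order bound (\ref{C2}), the quantity $I(y,\tilde y)\doteq\int_0^\infty \Lp(x,ty,t\tilde y)\dm$ is dominated near $0$ by its linear part $t\langle p_x,y-\tilde y\rangle$, and I expect $I$ to be $+\infty$, $-\infty$, or finite according to the sign of $\langle p_x,y-\tilde y\rangle$. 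The outer $\sup$ and inner $\inf$ in (\ref{Ldefi1}) will then force $y=\tilde y=\frac{p_x}{|p_x|}$, the unique pair of unit vectors annihilating that coefficient --- this is exactly where the hypothesis $p_x\neq 0$ enters, since otherwise the coefficient is identically zero and the selection mechanism fails.

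First I would verify that $I(y,\tilde y)$ is a well defined element of $[-\infty,+\infty]$ for all unit $y,\tilde y$, and establish the trichotomy just mentioned. Splitting $\int_0^\infty=\int_0^{\eta_x}+\int_{\eta_x}^\infty$, the tail is finite since $\phi$ is bounded and $\mu_s(\eta_x,\infty)<\infty$. On $(0,\eta_x)$, (\ref{C2}) yields the one-sided estimate $\Lp(x,ty,t\tilde y)\geq t\big(\langle p_x,y-\tilde y\rangle-2C_xt\big)$ (and its mirror image with $\leq$ and $+2C_xt$); thus when $\langle p_x,y-\tilde y\rangle>0$ the integrand is bounded below by a positive multiple of $t$ on a small interval $(0,\delta)$, forcing $I(y,\tilde y)=+\infty$; symmetrically $I(y,\tilde y)=-\infty$ when $\langle p_x,y-\tilde y\rangle<0$; and when $\langle p_x,y-\tilde y\rangle=0$ one has $|\Lp(x,ty,t\tilde y)|\leq 2C_xt^2$ near $0$, which is $\mu_s$-integrable since $s<1$, so $I(y,\tilde y)$ is finite. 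No $\infty-\infty$ indeterminacy arises, because the integrand is one-signed near $0$ in the first two cases.

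Next I would run the selection argument, using that $\max_{|\tilde y|=1}\langle p_x,\tilde y\rangle=|p_x|$, attained only at $\tilde y=\frac{p_x}{|p_x|}$. For any unit $y\neq\frac{p_x}{|p_x|}$ we have $\langle p_x,y\rangle<|p_x|$, so the choice $\tilde y=\frac{p_x}{|p_x|}$ gives $\langle p_x,y-\tilde y\rangle<0$ and hence $\inf_{|\tilde y|=1}I(y,\tilde y)=-\infty$; while for $y=\frac{p_x}{|p_x|}$, every $\tilde y\neq\frac{p_x}{|p_x|}$ gives $\langle p_x,y-\tilde y\rangle>0$, hence $I(y,\tilde y)=+\infty$, and $\tilde y=\frac{p_x}{|p_x|}$ gives the finite value $I\big(\frac{p_x}{|p_x|},\frac{p_x}{|p_x|}\big)$ (finite by Lemma \ref{lem1}, or by the case $\langle p_x,y-\tilde y\rangle=0$ above), so $\inf_{|\tilde y|=1}I\big(\frac{p_x}{|p_x|},\tilde y\big)=I\big(\frac{p_x}{|p_x|},\frac{p_x}{|p_x|}\big)$. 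Taking the supremum over $|y|=1$, every direction except $\frac{p_x}{|p_x|}$ contributes $-\infty$ and the remaining one contributes a real number, so
\begin{equation*}
\Ls(x)=\sup_{|y|=1}\inf_{|\tilde y|=1}I(y,\tilde y)=I\Big(\frac{p_x}{|p_x|},\frac{p_x}{|p_x|}\Big)=\int_0^\infty\Lp\Big(x,t\frac{p_x}{|p_x|},t\frac{p_x}{|p_x|}\Big)\dm,
\end{equation*}
which is (\ref{eq:equivdefifl}).

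I expect the only delicate point to be the trichotomy: one must argue carefully that $I(y,\tilde y)$ equals $\pm\infty$ unambiguously and with sign governed solely by $\operatorname{sign}\langle p_x,y-\tilde y\rangle$, and this is precisely where $s>\tfrac12$ is used essentially, through the divergence $\int_0^{\eta_x}t^{-2s}\,\mathrm{d}t=\infty$. The remaining ingredient --- the elementary fact that $\sup_{|y|=1}\inf_{|\tilde y|=1}\langle p_x,y-\tilde y\rangle=0$, with the extremum attained exactly at $y=\tilde y=\frac{p_x}{|p_x|}$ --- I would record at the outset, as it drives the whole selection.
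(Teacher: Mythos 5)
Your proof is correct. The key mechanism is the same one the paper exploits --- the divergence of $\int_0^{\eta_x} t\dm$ for $s>\frac12$ forces any extremizing directions to align with $p_x$ --- but the execution is genuinely different. The paper works with $\delta$-approximate extremizers $y_\delta,\tilde y_\delta$ of the sup and the inf, and uses the a priori finiteness of $\Ls(x)$ from Lemma \ref{lem1} to conclude that $\big\langle p_x,\frac{p_x}{|p_x|}-y_\delta\big\rangle\int_0^{\eta_x}t\dm<\infty$, hence $y_\delta=\tilde y_\delta=\frac{p_x}{|p_x|}$ exactly; the identity then follows in two steps (first freezing $y$, then $\tilde y$). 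You instead evaluate the sup-inf directly in the extended reals, via the trichotomy $I(y,\tilde y)\in\{+\infty,-\infty,\text{finite}\}$ governed by $\operatorname{sign}\langle p_x,y-\tilde y\rangle$, which is justified exactly as you say: one-signedness of the integrand near $0$ (from (\ref{C2}) and $2s>1$) rules out any $\infty-\infty$ ambiguity, and the $s<1$ side gives integrability on the diagonal. Your route is more self-contained --- it does not invoke the bounds of Lemma \ref{lem1} and it clarifies that every inner integral in (\ref{Ldefi1}) is well defined in $[-\infty,+\infty]$, so the sup-inf itself is unambiguous. What the paper's approximate-extremizer formulation buys is that it is precisely the template which gets quantified in Proposition \ref{kokon} for the truncated operator $\Lse$, where the integrals are all finite, exact alignment fails, and only the quantitative estimate $\big|\frac{p_x}{|p_x|}-y_\delta^\epsilon\big|\le A_\epsilon$ survives; your trichotomy argument, relying on actual divergence, does not carry over to that setting.
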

\begin{proof}
For $\delta>0$, let $y_\delta$ be such that $\Ls(x)\leq \inf_{|\tilde y|=1}
\int_0^\infty \Lp(x, ty_\delta, t\tilde y)\dm +\delta$. Splitting the
integral and applying (\ref{C2}) on the interval $(0,\eta_x)$ yields:
\begin{equation*}
\begin{split}
\Ls (x)&\leq \int_0^\infty \Lp\big(x, ty_\delta,
t\frac{p_x}{|p_x|}\big)\dm +\delta \leq \int_{0}^{\eta_x} t\big\langle
p_x, y_\delta - \frac{p_x}{|p_x|}\big\rangle \dm + C+\delta,
\end{split}
\end{equation*}
where the constant $C$ depends on $s$ and $\phi$. Using the lower
bound in (\ref{uno}), we conclude that:
$$\big\langle p_x,\frac{p_x}{|p_x|} - y_\delta \big\rangle \cdot \int_{0}^{\eta_x}
t\dm < \infty.$$
Since the product above is nonnegative while the integral
diverges to $\infty$, we get $y_\delta=\frac{p_x}{|p_x|}$ and:
\begin{equation}\label{due}
\Ls(x) = \inf_{|\tilde y|=1} \int_0^\infty \Lp\big(x, t\frac{p_x}{|p_x|}, t\tilde y)\dm.
\end{equation}

Let now $\tilde y_\delta$ be such that $\Ls(x) \geq \int_0^\infty
\Lp\big(x, t\frac{p_x}{|p_x|}, t\tilde y_\delta)\dm -\delta$. As
before, in virtue of (\ref{C2}) we get:
$\Ls (x)\geq \int_{0}^{\eta_x} t\big\langle
p_x, \frac{p_x}{|p_x|}-\tilde y_\delta\big\rangle \dm - C -\delta$, so
the upper bound in (\ref{uno}) gives:
$$\big\langle p_x,\frac{p_x}{|p_x|} -\tilde y_\delta \big\rangle \cdot \int_{0}^{\eta_x}
t\dm < \infty.$$
Consequently $\tilde y_\delta=\frac{p_x}{|p_x|}$, so that: $\Ls(x) \geq \int_0^\infty
\Lp\big(x, t\frac{p_x}{|p_x|},  t\frac{p_x}{|p_x|})\dm -\delta$ for
all $\delta>0$. The proof is done, in view of (\ref{due}).
\end{proof}

Note that formulation (\ref{eq:equivdefifl}) corresponds, up to a
constant, to the one given in \cite[Definition 1.1]{BCF} for
$p_x\not=0$.  The case $p_x=0$ in that definition is equivalent to (\ref{inno}).  

\section{A proof of Theorem \ref{jaszczu1}}\label{sec3}

We first observe that taking $p_x=0$ in (\ref{C2}) implies the
following bound, for all $\epsilon<\eta_x$:
\begin{equation}\label{due.5}
\begin{split}
|\Lse (x)-\Ls(x)|&\leq \sup_{|y|=|\tilde y|=1} \big|\int_\epsilon^\infty \Lp(x, ty,
t\tilde y)\dm - \int_0^\infty \Lp(x, ty, t\tilde y)\dm \big|  \\ &\leq
\int_0^\epsilon 2C_x t^2\dm = c_s s\cdot C_x\epsilon^{2-2s}.
\end{split}
\end{equation}
In order to estimate the same difference when $p_x\neq 0$, we will
quantify estimates in the proof of Proposition \ref{prop2} for higher regular
functions, as specified below.

\begin{proposition}\label{kokon}
Assume (\ref{H1}) with $p_x\neq 0$. Then, for every $\epsilon< \eta_x$ there holds:
\begin{equation}\label{tre}
\begin{split}
\Big|\Lse (x)- & \int_\epsilon^\infty\Lp\big(x, t\frac{p_x}{|p_x|},
t\frac{p_x}{|p_x|}\big)\dm\Big|\\ & \leq 4 c_s s\cdot C_x \big(\eta_x^{2-2s} -
  \epsilon^{2-2s}\big)\cdot A_\epsilon + c_s (1-s) \cdot \Big( \eta_x^{-2s} +
\frac{2s}{2s-1}\eta_x^{1-2s}\Big)\cdot \omega_\phi(A_\epsilon),
\end{split}
\end{equation}
\end{proposition}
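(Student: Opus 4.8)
The plan is to mimic the structure of the proof of Proposition \ref{prop2}, but to quantify at each step the near-extremality condition satisfied by the approximate optimizers $y, \tilde y$ appearing in the definition (\ref{Ldefi}) of $\Lse(x)$. First I would fix a small $\delta>0$ and pick $y_\delta$ with $\Lse(x)\le \inf_{|\tilde y|=1}\int_\epsilon^\infty \Lp(x,ty_\delta,t\tilde y)\dm + \delta$; testing the infimum at $\tilde y = \frac{p_x}{|p_x|}$, then splitting the integral into $(\epsilon,\eta_x)$ and $(\eta_x,\infty)$ and using (\ref{C2}) on the former and boundedness on the latter, I obtain a lower bound on $\big\langle p_x, \frac{p_x}{|p_x|}-y_\delta\big\rangle\cdot\int_\epsilon^{\eta_x}t\dm$ in terms of $\Lse(x)$, $C_x$, $\omega_\phi$, and $\|\phi\|_{L^\infty}$. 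In contrast to Proposition \ref{prop2}, the integral $\int_\epsilon^{\eta_x}t\dm = \frac{C_s}{2(1-s)}(\epsilon^{2-2s}-\eta_x^{2-2s})$ wait — one must be careful: for $s\in(\frac12,1)$ the relevant combinations involve $\epsilon^{1-2s}-\eta_x^{1-2s}$ in the denominators of $A_\epsilon$, so the correct moment to control is $\int_\epsilon^{\eta_x} t\dm$ evaluated with the measure $\mu_s$, which is finite and comparable to $c_s s(\epsilon^{2-2s}-\eta_x^{2-2s})$ — I will need to keep track of exactly which power appears and match it to the definition of $A_\epsilon$ and $\kappa_\epsilon$.

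The key quantitative refinement is this: where the proof of Proposition \ref{prop2} used only that a nonnegative quantity times a divergent integral is finite to force $y_\delta = \frac{p_x}{|p_x|}$, here the integral is finite, so instead I obtain $\big\langle \frac{p_x}{|p_x|}, \frac{p_x}{|p_x|}-y_\delta\big\rangle \le (\text{something})$, i.e. $\big|\frac{p_x}{|p_x|}-y_\delta\big|^2 \le A_\epsilon$ after using $\langle \hat p, \hat p - y_\delta\rangle = \frac12|\hat p - y_\delta|^2$ for unit vectors. The definition of $A_\epsilon$ — the maximum of the term built from $C_x/|p_x|$ and $\kappa_\epsilon$ built from $\omega_\phi/|p_x|$ — is precisely engineered so that both the $C^2$ error on $(\epsilon,\eta_x)$ and the oscillation error on $(\eta_x,\infty)$ are absorbed. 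So the plan is: (i) derive $|y_\delta - \hat p|^2\lesssim A_\epsilon + O(\delta/|p_x|)$; then letting $\delta\to 0$, conclude that the sup over $|y|=1$ is attained (or approached) only at directions within $\sqrt{A_\epsilon}$ of $\hat p$; (ii) for such $y$, estimate $\big|\int_\epsilon^\infty \Lp(x,ty,t\hat p)\dm - \int_\epsilon^\infty\Lp(x,t\hat p,t\hat p)\dm\big|$ by splitting at $\eta_x$, using (\ref{C2}) (or the $C^2$ bound) on $(\epsilon,\eta_x)$ to get a term $\lesssim C_x(\eta_x^{2-2s}-\epsilon^{2-2s})|y-\hat p|$ and the modulus of continuity on $(\eta_x,\infty)$ to get $\lesssim \omega_\phi(|y-\hat p|)\cdot(\eta_x^{-2s}+\eta_x^{1-2s})$; (iii) repeat the symmetric argument for the inner approximate minimizer $\tilde y_\delta$, which by (\ref{due})-type reasoning must also lie within $\sqrt{A_\epsilon}$ of $\hat p$, picking up the same two types of error; (iv) combine via the triangle inequality to reach (\ref{tre}), tracking constants so the factor $4c_s s$ in front of the $C_x$ term and $c_s(1-s)(\eta_x^{-2s}+\frac{2s}{2s-1}\eta_x^{1-2s})$ in front of $\omega_\phi(A_\epsilon)$ come out exactly.

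The main obstacle I anticipate is bookkeeping rather than conceptual: getting the constants in $A_\epsilon$ and $\kappa_\epsilon$ to line up exactly with the errors produced by the two integral splittings, and in particular handling the case-distinction implicit in $A_\epsilon = \max\{\cdots,\kappa_\epsilon\}$ so that whichever error dominates is the one controlled. A secondary subtlety is the monotonicity/self-improving definition of $\kappa_\epsilon$ as a supremum over $a\in[0,2]$ with $a^2\le \frac{8\omega_\phi(a)}{|p_x|}\cdot(\cdots)$: I must check that the deduced bound $|y_\delta-\hat p|^2\le(\text{RHS with }a=|y_\delta-\hat p|)$ indeed places $|y_\delta-\hat p|$ in the set over which the supremum defining $\kappa_\epsilon$ is taken, hence $|y_\delta-\hat p|\le\kappa_\epsilon\le A_\epsilon$; this is the one genuinely non-routine point and it relies on $a\mapsto a^2$ growing faster than $a\mapsto \omega_\phi(a)$ near $0$ together with the a priori bound $|y_\delta-\hat p|\le 2$. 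Once $|y_\delta-\hat p|, |\tilde y_\delta-\hat p|\le A_\epsilon$ (note $A_\epsilon\le\kappa_\epsilon\le 2$ need not hold, but the quantity $A_\epsilon$ is what enters $\omega_\phi(A_\epsilon)$ and the linear term, so no further truncation is needed), steps (ii)-(iv) are straightforward estimates on one-dimensional integrals against $\mu_s$.
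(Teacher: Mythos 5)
Your plan follows essentially the same route as the paper: quantify the localization of the approximate extremizers via the identity $\big|\hat p-y_\delta\big|^2=\frac{2}{|p_x|}\langle p_x,\hat p-y_\delta\rangle$ (with $\hat p=\frac{p_x}{|p_x|}$), split all integrals at $\eta_x$, run a case analysis matching the two entries of $A_\epsilon=\max\{\cdot,\kappa_\epsilon\}$ including the self-consistent use of $\kappa_\epsilon$ (your ``non-routine point'' is exactly the paper's argument), then estimate the integral difference for directions within $A_\epsilon$ of $\hat p$ and combine, both for the sup- and the inf-extremizer.

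One step, as literally written, would not deliver the stated constants and needs to be done the paper's way. In your step (i) you propose to test the infimum at $\tilde y=\hat p$, use (\ref{C2}) on $(\epsilon,\eta_x)$ and \emph{boundedness} on $(\eta_x,\infty)$, and close with a lower bound on $\Lse(x)$ (i.e.\ (\ref{uno})). That comparison produces an error independent of $A\doteq|\hat p-y_\delta|$: you get $A^2\lesssim\big(\|\phi\|_{L^\infty}+C_x\big)/\big(|p_x|\int_\epsilon^{\eta_x}t\dm\big)$, hence only $A\lesssim\epsilon^{s-1/2}$, which is much weaker than $A\le A_\epsilon$ (of order $\epsilon^{2s-1}$ for Lipschitz $\phi$) and cannot generate the $\omega_\phi(A_\epsilon)$ and $A_\epsilon$-weighted $C_x$ terms in (\ref{tre}). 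Since the integral in (\ref{Ldefi}) decouples into a sup-part plus an inf-part, the correct comparison is the direct one between the two ray integrals, $\int_\epsilon^\infty\phi(x+t\hat p)\dm\le\int_\epsilon^\infty\phi(x+ty_\delta)\dm+\delta$: then on $(\eta_x,\infty)$ the integrand is a \emph{difference} of values at points a distance $tA$ apart, bounded by $\omega_\phi(tA)\le(1+t)\,\omega_\phi(A)$, and on $(\epsilon,\eta_x)$ the Taylor estimate (using $|\nabla\phi(x+t\hat p)-p_x|\le 2C_xt$ and $A\le 2$) carries the crucial factor $A$; only then does the resulting inequality for $A^2$ match the two-term structure of $A_\epsilon$ and feed the $\kappa_\epsilon$ self-improvement. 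Your later paragraphs describe precisely this error structure, so the fix is consistent with your own plan, but the first-paragraph derivation should be discarded. A minor related wobble: the localization you should conclude is $|\hat p-y_\delta|\le A_\epsilon$ (linear), not $|\hat p-y_\delta|^2\le A_\epsilon$ or ``within $\sqrt{A_\epsilon}$''; your final combination step already assumes the correct linear form.
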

\begin{proof}
{\bf 1.} For every $\epsilon<\eta_x$ and every small $\delta>0$ 
let $|y_\delta^\epsilon|=1$ satisfy:
$\sup_{|y|=1}\int_\epsilon^\infty\phi(x+ty)\dm \leq \int_\epsilon^\infty\phi(x+ty_\delta^\epsilon)\dm +\delta$.
In particular, this implies:
$$\int_\epsilon^\infty\phi\big(x+t\frac{p_x}{|p_x|}\big)\dm \leq
\int_\epsilon^\infty\phi(x+ty_\delta^\epsilon)\dm +\delta,$$ 
Denote $A= \Big|\frac{p_x}{|p_x|} - y_\delta^\epsilon\Big|$.  Together
with (\ref{C11}), the above bound results in:
\begin{equation*}
\begin{split}
\delta &\geq \int_\epsilon^{\eta_x} \big(\phi\big(x+t\frac{p_x}{|p_x|}\big)
- \phi(x+t y_\delta^\epsilon) \big)\dm - \int_{\eta_x}^\infty \big|\phi\big(x+t\frac{p_x}{|p_x|}\big)
- \phi(x+t y_\delta^\epsilon)\big|\dm \\ & \geq 
\int_\epsilon^{\eta_x} t\Big\langle\nabla\phi\big(x+t\frac{p_x}{|p_x|}\big),
\frac{p_x}{|p_x|}-y_\delta^\epsilon\Big\rangle\dm -
\int_\epsilon^{\eta_x} C_xt^2A^2\dm 
- \int_{\eta_x}^\infty (1+t)\cdot\omega_\phi (A)\dm
\\ & \geq \Big\langle p_x,\frac{p_x}{|p_x|} - y_\delta^\epsilon\Big\rangle
\int_\epsilon^{\eta_x} t\dm - 4C_xA \int_\epsilon^{\eta_x} t^2\dm  -\omega_\phi(A) \int_{\eta_x}^\infty (1+t)\dm.
\end{split}
\end{equation*}
The last bound above follows by observing that $\big|\nabla\phi\big(x+t\frac{p_x}{|p_x|})- \nabla
\phi(x)\big|\leq 2C_xt$ for all $|t|\leq \eta_x$ and that $A\leq 2$. Consequently, we get:
\begin{equation*}
\begin{split}
\Big\langle &p_x,\frac{p_x}{|p_x|} - y_\delta^\epsilon\Big\rangle\leq  \frac{1}{
\int_\epsilon^{\eta_x} t\dm}\Big( \delta+ 4C_xA
\int_\epsilon^{\eta_x} t^2\dm +\omega_\phi(A)\int_{\eta_x}^\infty (1+t)\dm\Big).
\end{split}
\end{equation*}

On the other hand, by a straightforward calculation:
\begin{equation*}
\begin{split}
A^2=\Big|\frac{p_x}{|p_x|} - y_\delta^\epsilon\Big|^2 = 2 - 2 \Big\langle
\frac{p_x}{|p_x|},  y_\delta^\epsilon\Big\rangle = \frac{2}{|p_x|}
\Big\langle p_x,\frac{p_x}{|p_x|} - y_\delta^\epsilon\Big\rangle,
\end{split}
\end{equation*}
the last two displayed formulas yields that:
\begin{equation}\label{quattro}
\begin{split}
A^2 \leq  \frac{2}{|p_x| \int_\epsilon^{\eta_x} t\dm}\Big( \delta+ 4C_xA
\int_\epsilon^{\eta_x} t^2\dm +\omega_\phi(A) \int_{\eta_x}^\infty (1+t)\dm\Big).
\end{split}
\end{equation}

We now simplify (\ref{quattro}) as follows. Without loss of
generality, we may assume that $\delta>0$ satisfies: 
$ \delta \cdot |p_x|\int_\epsilon^{\eta_x}t\dm\leq \big(16 C_x \int_\epsilon^{\eta_x}t^2\dm\big)^2,$
In case when $\delta$ is larger than the two other terms in the right
hand side of (\ref{quattro}), we get:
\begin{equation}\label{cinque}
A^2\leq \frac{4\delta}{|p_x|\int_\epsilon^{\eta_x} t\dm}\leq
\Big(\frac{32 C_x \int_\epsilon^{\eta_x}t^2\dm}{|p_x|\int_\epsilon^{\eta_x}t\dm}\Big)^2,
\end{equation}
In the opposite case, there holds:
\begin{equation*}
A^2 \leq  \frac{4}{|p_x| \int_\epsilon^{\eta_x} t\dm}\Big(4C_xA
\int_\epsilon^{\eta_x} t^2\dm +\omega_\phi(A) \int_{\eta_x}^\infty
(1+t)\dm\Big)\doteq I_1 + I_2.
\end{equation*}
Further, when $I_2\leq I_1$, then we obtain the same bound as in (\ref{cinque}), namely:
\begin{equation*}
A\leq \frac{32 C_x  \int_\epsilon^{\eta_x}t^2\dm}{|p_x|\int_\epsilon^{\eta_x}t\dm} = 
\frac{16 C_x}{|p_x|}\cdot\frac{2s-1}{1-s}\cdot 
\frac{\eta_x^{2-2s}-\epsilon^{2-2s}}{\epsilon^{1-2s} -{\eta_x}^{1-2s}}. 
\end{equation*}
On the other hand, $I_1<I_2$ implies:
\begin{equation*}
A^2 \leq  \frac{8 \omega_\phi(A) \int_{\eta_x}^\infty (1+t)\dm}{|p_x|  \int_\epsilon^{\eta_x} t\dm}
=\frac{8\omega_\phi(A)}{|p_x|}\cdot \frac{\frac{2s-1}{2s}
  \eta_x^{-2s} +\eta_x^{1-2s}}{\epsilon^{1-2s} - \eta_x^{1-2s}}.
\end{equation*}
We hence conclude that $A\leq A_\epsilon$ in either of the above cases.

\smallskip

{\bf 2.} Similarly as in step 1, we see that the unit vector  $\tilde
y_\delta^\epsilon$ with the property:
$\inf_{|y|=1}\int_\epsilon^\infty\phi(x-ty)\dm \geq
\int_\epsilon^\infty\phi(x-t\tilde y_\delta^\epsilon)\dm -\delta$,
satisfies: $\big|\frac{p_x}{|p_x|}-\tilde y_\delta^\epsilon\big|\leq
A_\epsilon$. We now write:
\begin{equation*}
\int_\epsilon^\infty \Lp(x, t\tilde y_\delta^\epsilon, t\tilde  y_\delta^\epsilon)\dm
-\delta \leq \Lse (x)\leq \int_\epsilon^\infty \Lp(x,
ty_\delta^\epsilon, ty_\delta^\epsilon)\dm +\delta,
\end{equation*}
which implies:
\begin{equation}\label{sette}
\begin{split}
& \Big|\Lse (x) - \int_\epsilon^\infty \Lp\big(x, t\frac{p_x}{|p_x|}, t\frac{p_x}{|p_x|}\big)\dm
\Big|\leq \delta + \max \big\{ |I(y_\delta^\epsilon)|, |I(\tilde y_\delta^\epsilon)|\big\},\\ &
\mbox{where: } I(y)\doteq \int_\epsilon^\infty \phi(x+ty) - \phi\big(x+t \frac{p_x}{|p_x|}\big) + 
\phi(x-ty) - \phi\big(x-t \frac{p_x}{|p_x|}\big) \dm.
\end{split}
\end{equation}
Observe that:
\begin{equation*}
\begin{split}
|I(y_\delta^\epsilon)|\leq & \; \Big|\int_\epsilon^{\eta_x} \phi(x+t
y_\delta^\epsilon) - \phi\big(x+t \frac{p_x}{|p_x|}\big)  + 
\phi(x-t y_\delta^\epsilon) - \phi\big(x-t \frac{p_x}{|p_x|}\big)\dm\Big| \\ & + 
\int_{\eta_x}^\infty \big|\phi(x+t y_\delta^\epsilon) - \phi\big(x+t \frac{p_x}{|p_x|}\big)\big| + 
\big|\phi(x-t y_\delta^\epsilon) - \phi\big(x-t
\frac{p_x}{|p_x|}\big)\big|\dm \doteq \bar I_1 + \bar I_2.
\end{split}
\end{equation*}
In order to deal with $\bar I_1$, we use the Taylor expansion:
\begin{equation*}
\Big|\phi(x\pm t y_\delta^\epsilon) - \phi\big(x\pm t \frac{p_x}{|p_x|}\big)
-\Big\langle \nabla\phi\big(x\pm t \frac{p_x}{|p_x|}\big), \pm t
\big(y_\delta^\epsilon - \frac{p_x}{|p_x|}\big)\Big\rangle \Big|\leq
C_x t^2  \big| y_\delta^\epsilon - \frac{p_x}{|p_x|}\big|^2,
\end{equation*}
which upon integration implies:
\begin{equation*}
\begin{split}
\bar I_1 & \leq \int_\epsilon^{\eta_x} C_x t^2 \big(4 A+ 2A^2\big)\dm\leq
8C_x A\int_\epsilon^{\eta_x} t^2\dm = 4 C_x \alpha_s \frac{\eta_x^{2-2s} - \epsilon^{2-2s}}{1-s}\cdot A.
\end{split}
\end{equation*}
For the term $\bar I_2$, we get:
\begin{equation*}
\begin{split}
\bar I_2 & \leq 2\int_{\eta_x}^\infty (1+t)\cdot\omega_\phi(A)\dm
= 2 \alpha_s \Big( \frac{\eta_x^{-2s}}{2s} + \frac{\eta_x^{1-2s}}{2s-1}\Big)\cdot \omega_\phi(A).
\end{split}
\end{equation*}

In conclusion, we obtain the following bounds:
\begin{equation*}
|I(y_\delta^\epsilon)|, \quad |I(\tilde y_\delta^\epsilon)|\leq 4 C_x \alpha_s \frac{\eta_x^{2-2s} -
  \epsilon^{2-2s}}{1-s} A_\epsilon + 2 \alpha_s \Big( \frac{\eta_x^{-2s}}{2s} +
\frac{\eta_x^{1-2s}}{2s-1}\Big)\cdot \omega_\phi(A_\epsilon).
\end{equation*}
This ends the proof in virtue of (\ref{sette}).
\end{proof}

\medskip

\begin{corollary}\label{cor4.5}
Under the same assumptions and notation as in Proposition \ref{kokon}, we have:
\begin{equation*}
\begin{split}
\Big|&\Lse (x)- \Ls (x)\Big|\\ &\quad \leq 4 c_s s \cdot C_x \big(\eta_x^{2-2s} -
  \epsilon^{2-2s}\big) \cdot A_\epsilon + c_s (1-s)\cdot \Big(\eta_x^{-2s} +
\frac{2s}{2s-1}\eta_x^{1-2s}\Big)\cdot \omega_\phi(A_\epsilon) + c_s s\cdot C_x\epsilon^{2-2s}.
\end{split}
\end{equation*}
\end{corollary}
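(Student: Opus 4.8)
The plan is to derive this estimate from Propositions \ref{prop2} and \ref{kokon} by a single triangle inequality, together with the same tail computation already used in (\ref{due.5}). Since $p_x\neq 0$ under (\ref{H1}), the unit vector $v_x\doteq \frac{p_x}{|p_x|}$ is well defined, and I would split
\begin{equation*}
\begin{split}
\big|\Lse (x)-\Ls (x)\big| \leq {} & \Big|\Lse (x) - \int_\epsilon^\infty \Lp(x, tv_x, tv_x)\dm\Big| \\ & + \Big|\int_\epsilon^\infty \Lp(x, tv_x, tv_x)\dm - \Ls (x)\Big|.
\end{split}
\end{equation*}

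The first summand on the right is exactly the left hand side of (\ref{tre}), so Proposition \ref{kokon} bounds it by the first two terms in the claimed estimate, namely $4 c_s s\cdot C_x (\eta_x^{2-2s} - \epsilon^{2-2s})\cdot A_\epsilon + c_s (1-s)(\eta_x^{-2s} + \frac{2s}{2s-1}\eta_x^{1-2s})\cdot \omega_\phi(A_\epsilon)$.

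For the second summand, Proposition \ref{prop2} yields $\Ls (x)=\int_0^\infty \Lp(x, tv_x, tv_x)\dm$, so this summand equals $\big|\int_0^\epsilon \Lp(x, tv_x, tv_x)\dm\big|$. Taking $y=\tilde y = v_x$ makes the linear term $t\langle p_x, y-\tilde y\rangle$ in (\ref{C2}) vanish, whence $|\Lp(x, tv_x, tv_x)|\leq 2C_x t^2$ for $t<\eta_x$; integrating over $(0,\epsilon)$ with $\epsilon<\eta_x$ gives, just as in (\ref{due.5}), $\int_0^\epsilon 2C_x t^2\dm = c_s s\cdot C_x\epsilon^{2-2s}$. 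Adding the two bounds produces precisely the right hand side of the corollary.

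There is essentially no genuine obstacle here: the statement is a bookkeeping corollary. The only points to watch are that both Proposition \ref{prop2} and Proposition \ref{kokon} are applied under the standing hypothesis $p_x\neq 0$ (so that $v_x$ makes sense and the extremizers are pinned down), and that the passage between the $C_s$- and the $c_s$-normalization uses $C_s = s(1-s)c_s$ to reconcile $\int_0^\epsilon 2C_x t^2\dm = C_s C_x \frac{\epsilon^{2-2s}}{1-s}$ with the displayed constant $c_s s\cdot C_x\epsilon^{2-2s}$.
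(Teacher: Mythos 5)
Your proposal is correct and coincides with the paper's own argument: the paper likewise uses Proposition \ref{prop2} to write $\Ls(x)$ as the integral along $p_x/|p_x|$, bounds the missing piece $\int_0^\epsilon$ by a second-order Taylor estimate giving $c_s s\cdot C_x\epsilon^{2-2s}$, and combines this with Proposition \ref{kokon} via the triangle inequality. Your constant bookkeeping (using $C_s=s(1-s)c_s$) is also accurate.
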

\begin{proof}
Observe that for all $t<\eta_x$ there holds:
$$\big|\Lp\big(x, t\frac{p_x}{|p_x|}, t\frac{p_x}{|p_x|}\big) \big|
\leq t^2 \|\nabla^2\phi\|_{L^\infty(B_t)}.$$
Consequently and in view of Proposition \ref{prop2} we get:
$$\Big|\Ls (x) - \int_\epsilon^\infty \Lp\big(x, t\frac{p_x}{|p_x|}, t\frac{p_x}{|p_x|}\big)\dm\Big|
\leq \int_0^\epsilon \Big|\Lp\big(x, t\frac{p_x}{|p_x|}, t\frac{p_x}{|p_x|}\big)\Big|\dm
\leq \frac{\alpha_s}{1-s}\cdot C_x \epsilon^{2-2s}.$$
This achieves the proof by Proposition \ref{kokon}.
\end{proof}

\medskip

Note that the bound in Corollary \ref{cor4.5} is essentially valid in both cases $p_x\neq 0$ and
$p_x=0$, because of (\ref{due.5}). Scaling the said bound by the
factor $\frac{s\epsilon^{2s}}{\alpha_s}$, we directly deduce Theorem \ref{jaszczu1}.

\medskip

\begin{remark}\label{kura}
\begin{enumerate}[leftmargin=7mm]
\item [(i)] Observing that: $\omega_\phi(a)\leq 2\|\phi\|_{L^\infty}$,
  we get for all $\epsilon<\frac{\eta_x}{2}$:
$$\kappa_\epsilon\leq \Big(\frac{16\|\phi\|_{L^\infty}}{|p_x|}\cdot 
\frac{\frac{2s-1}{2s} \eta_x^{-2s} +\eta_x^{1-2s}}{\epsilon^{1-2s} - \eta_x^{1-2s}}\Big)^{1/2}
\leq 8 \Big(\frac{\|\phi\|_{L^\infty}}{|p_x|}\cdot 
\frac{\eta_x^{-2s} +\eta_x^{1-2s}}{2s-1}\Big)^{1/2}\epsilon^{s-1/2}.$$
In the second inequality we used that for all $\epsilon<\frac{\eta_x}{2}$
and all $s\in (\frac{1}{2},1)$ there holds:
$$\epsilon^{1-2s} -
\eta_x^{1-2s}> \epsilon^{1-2s} (1-2^{1-2s}), \qquad 1-2^{1-2s}\geq (2s-1)\ln\sqrt{2} >\frac{2s-1}{4}.$$
The first quantity in $A_\epsilon$ is of order
$\epsilon^{2s-1}$, so the right hand side in (\ref{tre}) is:
$$ C(s)\cdot C\big(C_x, \frac{1}{|p_x|}, \|\phi\|_{L^\infty}\big)\cdot
C(\eta_x)\epsilon^{s-1/2} + C(s)\cdot C\big(\frac{1}{|p_x|}, \|\phi\|_{L^\infty}\big)\cdot
C(\eta_x)\omega_\phi(\epsilon^{s-1/2}), $$
where $C(s)$ depends only on $s$ and $C(\eta_x)$ only on $\eta_x$ and
the remaining constants depend on the other displayed terms, in a
nondecreasing manner.
\item[(ii)] When $\phi\in C^{0,\alpha}(\R^N\setminus \bar B_{\eta_x})$ with
$\alpha\in (0,1)$, then $\omega_\phi(a)=[\phi]_\alpha a^{\alpha}$. Therefore:
$$\kappa_\epsilon \leq \Big(\frac{32\;[\phi]_{\alpha}}{|p_x|}\cdot 
\frac{\eta_x^{-2s} +\eta_x^{1-2s}}{2s-1}\Big)^{\frac{1}{2-\alpha}}\epsilon^{\frac{2s-1}{2-\alpha}},$$
whereas (\ref{tre}) can be replaced with:
$ C(s)\cdot C\big(\frac{1}{|p_x|}, [\phi]_{\alpha}\big)\cdot
C(\eta_x) \epsilon^{\alpha\cdot\frac{2s-1}{2-\alpha}}.$
\item[(iii)] Finally, for $\phi$  Lipschitz on $\R^N\setminus \bar
  B_{\eta_x}$ with the Lipschitz constant $\mbox{Lip}_\phi$, we get: 
$$\kappa_\epsilon \leq \frac{32 \; \mbox{Lip}_\phi}{|p_x|}\cdot 
\frac{\eta_x^{-2s} +\eta_x^{1-2s}}{2s-1}\epsilon^{2s-1},
\quad A_\epsilon\leq \frac{32}{|p_x|}\cdot \max\Big\{\frac{2C_x\eta_x^{2-2s}}{1-s},
\frac{\mbox{Lip}_\phi (\eta_x^{-2s}+\eta_x^{1-2s})}{2s-1}\Big\}\epsilon^{2s-1}.$$
Indeed, both quantities in $A_\epsilon$ have $\epsilon^{2s-1}$-order.
The expression in (\ref{tre}) is then:
$ C(s)\cdot C\big(C_x,\frac{1}{|p_x|}, \mbox{Lip}_\phi\big)\cdot
C(\eta_x) \epsilon^{2s-1}$, whereas the order of the error
bounding quantity in Theorem \ref{jaszczu1} is
$C(s)\cdot(\epsilon^{4s-1}+\epsilon^2)$ as $\epsilon\to 0+$, and
$C(s)\to\infty$ as $s\to 1-$.
\end{enumerate}
\end{remark}

\section{A proof of Theorem \ref{jaszczu2}}\label{sec4}

We note the following refinement of the argument in the proof of Corollary \ref{cor4.5}:

\begin{proposition}\label{grazyna}
Let $\phi\in C^2(\bar B_{\eta_x})$ satisfy: $p_x\doteq \nabla
\phi(x)\neq 0$. Then, for every $\epsilon< \eta_x$ such that $\epsilon
|\nabla^2\phi(x)|\leq |p_x|$, there holds:
\begin{equation}\label{sette.5}
\begin{split}
\Big| c_s s\cdot \epsilon^{-2s} \cdot \frac{1}{2}\Big(\sup_{B_\epsilon(x)}&\;\phi +
 \inf_{B_\epsilon(x)}\phi -2\phi(x)\Big) - \int_0^\epsilon\Lp\big(x,
t\frac{p_x}{|p_x|}, t\frac{p_x}{|p_x|}\big)\dm \Big| \\ & \leq 
c_s s\Big(2\epsilon^{3-2s}\frac{|\nabla^2\phi(x)|^2}{|p_x|}
+ \epsilon^{2-2s}\sup_{y\in B_\epsilon(x)}|\nabla^2\phi(y) - \nabla^2\phi(x)|\Big).
\end{split}
\end{equation}
\end{proposition}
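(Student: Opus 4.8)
The strategy is to split the difference in two, interposing the quantity obtained by freezing the Hessian at $x$ in a second-order Taylor expansion along the gradient direction. More precisely, set $e=\frac{p_x}{|p_x|}$ and write the target difference as the sum of

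(a) the error between $c_s s\,\epsilon^{-2s}\cdot\frac12\big(\sup_{B_\epsilon(x)}\phi+\inf_{B_\epsilon(x)}\phi-2\phi(x)\big)$ and $c_s s\,\epsilon^{-2s}\cdot\frac12\langle \nabla^2\phi(x):e\otimes e\rangle\epsilon^2 = \int_0^\epsilon t^2\langle\nabla^2\phi(x):e\otimes e\rangle\,\mathrm d\mu_s(t)$, and

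(b) the error between $\int_0^\epsilon t^2\langle\nabla^2\phi(x):e\otimes e\rangle\,\mathrm d\mu_s(t)$ and $\int_0^\epsilon \Lp\big(x, te, te\big)\,\mathrm d\mu_s(t)$.

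For (b) I would Taylor-expand $\phi(x\pm te)$ to second order with the integral form of the remainder: $\Lp(x,te,te)=\phi(x+te)+\phi(x-te)-2\phi(x)=t^2\int_0^1 2(1-\sigma)\langle\nabla^2\phi(x\pm\sigma te):e\otimes e\rangle\,\mathrm d\sigma$ (symmetrized over $\pm$), so that $\big|\Lp(x,te,te)-t^2\langle\nabla^2\phi(x):e\otimes e\rangle\big|\le t^2\sup_{y\in B_\epsilon(x)}|\nabla^2\phi(y)-\nabla^2\phi(x)|$. Integrating against $\mathrm d\mu_s(t)=\frac{C_s}{t^{1+2s}}\mathrm dt=\frac{s(1-s)c_s}{t^{1+2s}}\mathrm dt$ over $(0,\epsilon)$ and using $\int_0^\epsilon t^{1-2s}\mathrm dt=\frac{\epsilon^{2-2s}}{2-2s}$ yields exactly the second term $c_s s\,\epsilon^{2-2s}\sup_{y\in B_\epsilon(x)}|\nabla^2\phi(y)-\nabla^2\phi(x)|$ on the right-hand side of (\ref{sette.5}).

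For (a), the key geometric point is that, since $\epsilon|\nabla^2\phi(x)|\le |p_x|$, the map $y\mapsto\phi(x+y)$ on $\bar B_\epsilon(x)$ is, to first order, dominated by its gradient term $\langle p_x,y\rangle$, so that $\sup_{B_\epsilon}\phi$ is attained near $x+\epsilon e$ and $\inf_{B_\epsilon}\phi$ near $x-\epsilon e$. Quantitatively: for any $|y|=\epsilon$, a second-order Taylor expansion gives $\phi(x+y)=\phi(x)+\langle p_x,y\rangle+\frac12\langle\nabla^2\phi(x):y\otimes y\rangle+o(\epsilon^2)$; maximizing the first two terms over $|y|\le\epsilon$ forces $y$ within $O\big(\epsilon^2\frac{|\nabla^2\phi(x)|}{|p_x|}\big)$ of $\epsilon e$ (a Cauchy–Schwarz/quadratic argument identical in spirit to the computation of $A$ in Proposition \ref{kokon}), and then the Hessian term is $\frac{\epsilon^2}{2}\langle\nabla^2\phi(x):e\otimes e\rangle$ up to an error of order $\epsilon\cdot|\nabla^2\phi(x)|\cdot\epsilon^2\frac{|\nabla^2\phi(x)|}{|p_x|}=\epsilon^3\frac{|\nabla^2\phi(x)|^2}{|p_x|}$. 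Symmetrically for the infimum near $x-\epsilon e$. Adding, the $\langle p_x,\cdot\rangle$ contributions cancel and we obtain $\big|\frac12(\sup_{B_\epsilon}\phi+\inf_{B_\epsilon}\phi-2\phi(x))-\frac{\epsilon^2}{2}\langle\nabla^2\phi(x):e\otimes e\rangle\big|\le 2\epsilon^3\frac{|\nabla^2\phi(x)|^2}{|p_x|}+(\text{remainder})$; multiplying by $c_s s\,\epsilon^{-2s}$ produces the first term $2c_s s\,\epsilon^{3-2s}\frac{|\nabla^2\phi(x)|^2}{|p_x|}$ on the right of (\ref{sette.5}). The $C^2$-remainder in this step, of size $o(\epsilon^2)\cdot\epsilon^{-2s}$, is absorbed into the modulus-of-continuity term $c_s s\,\epsilon^{2-2s}\sup_{B_\epsilon(x)}|\nabla^2\phi(y)-\nabla^2\phi(x)|$ already present, by being slightly more careful and using the integral remainder form throughout rather than Peano's.

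The main obstacle is step (a): one must show that the approximate extremizers of $\phi$ on $B_\epsilon(x)$ lie within distance $O\big(\epsilon^2|\nabla^2\phi(x)|/|p_x|\big)$ of $x\pm\epsilon e$, and this is exactly where the hypothesis $\epsilon|\nabla^2\phi(x)|\le |p_x|$ is used (it guarantees the bound on the displacement is $\le \epsilon$, keeping the extremizer on the boundary sphere and controlling the quadratic correction). The bookkeeping of which error falls into the $\epsilon^{3-2s}$ term versus the $\epsilon^{2-2s}\sup|\nabla^2\phi(y)-\nabla^2\phi(x)|$ term requires care, but is routine once the localization of the extremizers is established; everything else is Taylor expansion and the elementary integral $\int_0^\epsilon t^{-1-2s+k}\mathrm dt$.
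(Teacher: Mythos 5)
Your proposal is correct and follows essentially the same route as the paper: compare both quantities with the interposed term $\int_0^\epsilon t^2\Delta_\infty\phi(x)\dm=\frac{c_ss}{2}\epsilon^{2-2s}\Delta_\infty\phi(x)$, handle the nonlocal piece by Taylor expansion of $\Lp(x,te,te)$, and handle $\frac12(\sup_{B_\epsilon}\phi+\inf_{B_\epsilon}\phi-2\phi(x))$ by the local $\infty$-Laplacian expansion (which the paper simply cites as (\ref{otto}) from \cite{LewB}, while you sketch its proof via localization of the extremizers near $x\pm\epsilon\frac{p_x}{|p_x|}$, using $\epsilon|\nabla^2\phi(x)|\le|p_x|$ exactly as in the cited argument). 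One small bookkeeping note: your step (b) actually yields $\frac{c_ss}{2}\epsilon^{2-2s}\sup_{B_\epsilon(x)}|\nabla^2\phi(y)-\nabla^2\phi(x)|$, i.e.\ half of the stated second term, with the other half supplied by the Hessian-continuity remainder in step (a), so the totals match the bound in (\ref{sette.5}).
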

\begin{proof}
A simple application of Taylor's expansion yields:
$$\big|\Lp\big(x, t\frac{p_x}{|p_x|}, t\frac{p_x}{|p_x|}\big) - t^2\Delta_\infty\phi(x)\big|
\leq t^2 \sup_{y\in B_t}|\nabla^2\phi(y) - \nabla^2\phi(x)|,$$
where we recall that $\Delta_\infty\phi(x)=\big\langle
\nabla^2\phi(x): \frac{p_x}{|p_x|}\otimes \frac{p_x}{|p_x|}\big\rangle$. 
Integrating the above $\int_0^\epsilon\dm$, we get:
$$ \Big|\int_0^\epsilon\Lp\big(x, t\frac{p_x}{|p_x|}, t\frac{p_x}{|p_x|}\big) \dm - 
\frac{\alpha_s}{2(1-s)}\epsilon^{2-2s}\Delta_\infty\phi(x)\Big|\leq \frac{\alpha_s}{2(1-s)}\epsilon^{2-2s}
\sup_{y\in B_\epsilon}|\nabla^2\phi(y) - \nabla^2\phi(x)|.$$
Recalling that (see for example \cite[section 3.2]{LewB}):
\begin{equation}\label{otto}
\Big|\big( \sup_{B_\epsilon} \phi + \inf_{B_\epsilon} \phi
-2\phi(x)\big) - \epsilon^2\Delta_\infty\phi(x)\Big|\leq 4\epsilon^3\frac{|\nabla^2\phi(x)|^2}{|p_x|}
+ \epsilon^2 \sup_{y\in B_\epsilon}|\nabla^2\phi(y) - \nabla^2\phi(x)|,
\end{equation}
and taking the linear combination of the two above formulas, the proof is done.
\end{proof}

\medskip

The proof of Theorem \ref{jaszczu2} follows directly by summing up
formulas (\ref{tre}), (\ref{sette.5}), and multiplying the result by the factor $\frac{(1-s)s}{\alpha_s}\epsilon^{2s}$. Since:
$$\Big(\frac{(1-s)s}{\alpha_s}\epsilon^{2s}\Big) \Lse (x) +\Big(\frac{(1-s)s}{\alpha_s}\epsilon^{2s}\Big)\cdot
\frac{\alpha_s}{2(1-s)}\epsilon^{-2s}\big(\sup_{B_\epsilon}\phi
+\inf_{B_\epsilon}\phi - 2\phi(x)\big) = \A_\epsilon\phi(x) - \phi(x),$$
the error in the claimed expansion is the sum of errors
in (\ref{tre}) and (\ref{sette.5}), multiplied by $\frac{(1-s)s}{\alpha_s}\epsilon^{2s}$.

\begin{remark}\label{piesio}
\begin{enumerate}[leftmargin=7mm]
\item[(i)] Analysis similar to Remark \ref{kura} allows for computing
  the order of the error term in Theorem \ref{jaszczu2} when $\phi$ is Lipschitz:
\begin{equation*}
\begin{split}
&C(s) \cdot C\big(C_x,\frac{1}{|p_x|}, \mbox{Lip}_\phi\big)\cdot C(\eta_x)
\epsilon^{4s-1} + C(s)\cdot C\big(|\nabla^2\phi(x)|,
\frac{1}{|p_x|}\big)\epsilon^3 +C(s)\cdot o(\epsilon^2).
\end{split}
\end{equation*}
As before, $C(s)$ depends only on $s$, and $C(\eta_x)$ only on
$\eta_x$, while the remaining constants depend on the displayed terms in a
nondecreasing manner. For $\phi\in C^{2,1}(B_{\eta_x})$,
the above quantity has order $\epsilon^{4s-1}+\epsilon^3$, which
equals $\epsilon^3$ at $s=1$.

\item[(ii)] For a more precise analysis of the asymptotic expansion when $s\to 1-$, note that:
\begin{equation*}
\begin{split}
& \kappa_\epsilon\leq \sup\Big\{ a;~ a\in [0,2] \; \mbox{ and } \; a^2\leq
 \frac{32\;\omega_\phi(a)}{|p_x|}\cdot \frac{
  \eta_x^{-2s} +\eta_x^{1-2s}}{2s-1}\epsilon^{2s-1}\Big\}, \\ &
\frac{16 C_x}{|p_x|}\cdot\frac{2s-1}{1-s}\cdot 
\frac{\eta_x^{2-2s}-\epsilon^{2-2s}}{\epsilon^{1-2s} -{\eta_x}^{1-2s}}
\leq \frac{16 C_x}{|p_x|}\cdot 16\eta_x^{2-2s}|\ln\epsilon|\epsilon^{2s-1}.
\end{split}
\end{equation*}
The first bound above is valid when $\epsilon<\frac{\eta_x}{2}$, while
for the second bound we used:
$\eta_x^{2-2s}-\epsilon^{2-2s}\leq (2-2s) (\ln\eta_x - \ln\epsilon)
\eta_x^{2-2s}\leq 4(1-s)|\ln\epsilon|\eta_x^{2-2s}$, when $\epsilon<e^{-|\ln \eta_x|}$.
Consequently, $A_\epsilon\leq o(1)$ as $\epsilon\to 0+$, uniformly in  
$s\in (\frac{1}{2}+\delta,1)$. For each fixed $\epsilon$, the bound in
Theorem \ref{jaszczu2} converges to (consistently with (\ref{otto}) as $s\to 1-$): 
$$2\epsilon^3\frac{|\nabla^2\phi(x)|^2}{|p_x|}
+ \epsilon^2\sup_{y\in  B_\epsilon}\big|\nabla^2\phi(y)-\nabla^2\phi(x)\big|.$$
We also observe that when $\phi$ is Lipschitz on $\R^N\setminus \bar B_{\eta_x}$, the
said bound becomes:
\begin{equation*}
\begin{split} 
& (1-s) \epsilon^{4s-1}|\ln\epsilon|^2\cdot
\frac{2^9 C_x}{|p_x|} \;  \cdot\Big(16s C_x\eta_x^{4-4s} +
\mbox{Lip}_\phi\cdot \big(\frac{\eta_x^{2-4s}}{2}+\frac{s\eta_x^{3-4s}}{2s-1}\big)|\ln\epsilon|^{-1}\Big)
\\ & + s\Big(2\epsilon^3\frac{|\nabla^2\phi(x)|^2}{|p_x|}
+ \epsilon^2\sup_{y\in B_\epsilon}\big|\nabla^2\phi(y)-\nabla^2\phi(x)\big|\Big).
\end{split}
\end{equation*}
\end{enumerate}
\end{remark}

\section{Further remarks}\label{sec5}

\subsection{Spherical prisms as integration domains}\label{sec52}
With an eye towards future applications, we now consider another averaging operator:
\begin{equation}\label{MVP3}
\bar\A_\epsilon^o \phi(x) = \frac{1}{2}\Big(\sup_{|y|=1}\fint_{T^{\epsilon, R, \alpha}(y)}
\phi(x+z)\dmN + \inf_{|y|=1}\fint_{T^{\epsilon, R, \alpha}(y)}
\phi(x+z)\dmN \Big).
\end{equation}
Above, the integration is taken with respect to the measure $\mu_s^N$ on the Borel subsets of $\R^N$:
$$\mathrm{d}\mu_s^N(z)  \doteq \frac{C(N,s)}{|z|^{N+2s}}\;\mathrm{d}z \quad \mbox{ where }\;
C(N,s)=\frac{4^s s\Gamma\big(\frac{N}{2}+s\big)}{\pi^{N/2}\Gamma\big(1-s\big)} =
\Big(\int_{\R^N}\frac{1-\cos \langle z, e_1\rangle}{|z|^{N+2s}}\;\mathrm{d}z\Big)^{-1} .$$
Clearly, $C(1,s)=\alpha_s$ and $\mu_s^1=\mu_s$. The integration domain 
$T^{\epsilon, R, \alpha}(y)$ is the regular spherical prism in $\R^N$,
oriented in the direction $y\in\R^N\setminus\{0\}$, truncated at the
heights $0<\epsilon<R$, and with the
aperture angle $\angle$ determined by $\alpha>0$ as described in:
$$T^{\epsilon, R, \alpha}(y) = \Big\{ z\in\R^N; ~ \sin \frac{\angle
  (y,z)}{2}<\alpha, ~~ \langle y,z\rangle>0 \;\mbox{ and } \; \epsilon < |z|<R\Big\}.$$
With the above notation, $T^{0, \infty, \alpha}(y)$ is an infinite
cone, and we observe that such cones were used in the definition of
the fractional $p$-Laplacian $\Delta_p^s$ in \cite{BCF2}, with
$p=p(\alpha, N, s)$. We have:

\begin{lemma}\label{lem8}
Assume (\ref{H1}). Then, for every $\epsilon<\eta_x$, $R>\max\{\eta_x, 1\}$
and $\alpha<\frac{1}{2}$, there holds:
\begin{equation*}
\begin{split}
&\sup_{|y|=1} \Big|\fint_{T^{\epsilon, R, \alpha}(y)} \phi(x+z)\dmN - \fint_\epsilon^\infty
\phi(x+ty)\dm \Big| \\ & \qquad \qquad \leq 2 \big|\frac{\epsilon}{R}\big|^{2s} \cdot \|\phi\|_{L^\infty}
+ \max\Big\{2\big(|p_x| + 2C_x\eta_x \big)\eta_x\cdot \alpha,\;
3R\cdot \omega_\phi(\alpha)\Big\}. 
\end{split}
\end{equation*}
\end{lemma}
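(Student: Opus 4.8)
The plan is to compare the two averages directly by first normalizing away the (explicitly computable) masses of the measures, and then splitting the region of integration into a near-origin part where the $C^2$-estimate \eqref{C2} applies, and a far part where the modulus of continuity $\omega_\phi$ and boundedness control the error. First I would compute the total masses: $\mu_s(\epsilon,\infty)=\frac{C_s}{2s\epsilon^{2s}}$ as already noted in the text, and the analogous quantity $\mu_s^N\big(T^{\epsilon,R,\alpha}(y)\big)$, which by passing to polar coordinates factors as $\big(\int_\epsilon^R r^{-1-2s}\,\mathrm dr\big)$ times a solid-angle factor $\sigma(\alpha)$ depending only on $\alpha$ and $N$ (and $C(N,s)$). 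The key point is that when we write $\fint$ we divide by this mass, so the solid-angle factor cancels between numerator and denominator in $\fint_{T^{\epsilon,R,\alpha}(y)}$; what remains is that $\fint_{T^{\epsilon,R,\alpha}(y)}\phi(x+z)\dmN$ equals a weighted average (against $r^{-1-2s}\,\mathrm dr$ on $(\epsilon,R)$ and uniform measure on the spherical cap of half-angle $\approx 2\alpha$ around $y$) of the values $\phi(x+r\omega)$.

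Next I would estimate the discrepancy coming from the truncation at $R$ versus $\infty$: replacing $\fint_\epsilon^\infty$ by $\fint_\epsilon^R$ in the one-dimensional average changes it by at most $2\|\phi\|_{L^\infty}\cdot\frac{\mu_s(R,\infty)}{\mu_s(\epsilon,\infty)}=2\|\phi\|_{L^\infty}(\epsilon/R)^{2s}$, using $\mu_s(R,\infty)/\mu_s(\epsilon,\infty)=(\epsilon/R)^{2s}$; this produces the first term on the right-hand side. It remains to bound, uniformly in $|y|=1$,
\[
\Big|\fint_{T^{\epsilon,R,\alpha}(y)}\phi(x+z)\dmN - \fint_\epsilon^R\phi(x+ty)\dm\Big|,
\]
which, after the cancellation above, is the average over the spherical cap (and over $r\in(\epsilon,R)$ against the same radial weight) of $\phi(x+r\omega)-\phi(x+ry)$. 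Here I split $r<\eta_x$ from $r\geq\eta_x$. For $r<\eta_x$ and $\omega$ in the cap, $|r\omega-ry|=r|\omega-y|\leq 2r\alpha<\eta_x$, so \eqref{C11} (equivalently the linearization in \eqref{C2}) gives $|\phi(x+r\omega)-\phi(x+ry)|\leq |p_x|\,r|\omega-y| + 2C_x r^2 \leq (|p_x|+2C_x\eta_x)\cdot 2\eta_x\alpha$; note $|\omega-y|=2\sin(\angle(\omega,y)/2)<2\alpha$. For $r\geq\eta_x$ we are outside $\bar B_{\eta_x}$ and $|r\omega-ry|\leq 2R\alpha$, so hypothesis (ii) bounds the integrand by $\omega_\phi(2R\alpha)\leq\omega_\phi(\alpha)$ up to a harmless numerical constant — absorbed into the factor $3R$ after accounting for the crude ratio of the radial masses on $[\eta_x,R)$ versus $[\epsilon,R)$, which is $\leq 1$ (one can even be wasteful here). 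Taking the maximum of the two regimes (rather than the sum, which is also fine but weaker) and combining with the truncation term yields the claimed inequality, after checking that the solid-angle and radial-mass normalizations have all canceled or been bounded by $1$.

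The main obstacle is purely bookkeeping: one must verify that the spherical-cap normalization genuinely cancels in $\fint$ for \emph{every} direction $y$ simultaneously (this is where rotational invariance of $\mu_s^N$ and of the construction $T^{\epsilon,R,\alpha}(y)$ is used — the solid-angle factor is independent of $y$), and one must track that the geometric relation $|\omega-y|=2\sin\frac{\angle(\omega,y)}{2}<2\alpha$ is exactly the defining condition of the prism, so no spurious dimensional constants creep in. No genuinely hard estimate is involved; everything reduces to \eqref{C2}, hypothesis (ii), and the explicit value of $\mu_s$ of half-lines. I expect the only place requiring care is pinning down the numerical constant multiplying $R\,\omega_\phi(\alpha)$: depending on how generously one bounds $\omega_\phi(2R\alpha)$ by $\omega_\phi(\alpha)$ (valid since $R>1$ only if one is content to iterate subadditivity, or one simply uses $\omega_\phi(2R\alpha)\le 2R\,\omega_\phi(\alpha)$ via Lipschitz-type control of the modulus on scales $\le 2$, then $\le 1$), the constant $3$ is comfortably attainable; I would choose the cleanest such estimate and not optimize further.
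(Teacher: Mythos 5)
Your overall route is the same as the paper's: truncate the one-dimensional average at $R$ (with cost $2\|\phi\|_{L^\infty}(\epsilon/R)^{2s}$, computed exactly as you do, the factor $2$ coming from the tail plus the renormalization), use rotational invariance so the angular factor cancels and $\fint_{T^{\epsilon,R,\alpha}(y)}\phi(x+|z|y)\dmN=\fint_\epsilon^R\phi(x+ty)\dm$, and then bound the remaining average of $\phi(x+r\omega)-\phi(x+ry)$ by its supremum over the prism, split at $r=\eta_x$; your treatment of the far region and of the modulus (subadditivity giving $\omega_\phi(2R\alpha)\le(2R+1)\,\omega_\phi(\alpha)\le 3R\,\omega_\phi(\alpha)$ since $R>1$) matches the paper's bookkeeping.

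There is, however, a genuine error in your near-region estimate. From (\ref{C11}) applied at the center $x$ you correctly get $|\phi(x+r\omega)-\phi(x+ry)|\le |p_x|\,r|\omega-y|+2C_xr^2$, but the next inequality, $|p_x|\,r|\omega-y|+2C_xr^2\le\big(|p_x|+2C_x\eta_x\big)\cdot 2\eta_x\alpha$, is false: the quadratic remainder $2C_xr^2$ carries no factor of $\alpha$, and for $r$ close to $\eta_x$ and $\alpha$ small the left-hand side is of order $C_x\eta_x^2$ while the right-hand side tends to $0$. With your chain the inner contribution would be $O(\alpha)+2C_x\eta_x^2$, which does not vanish as $\alpha\to0$, so the lemma's bound (whose first term is proportional to $\alpha$) cannot be reached this way; comparing the Taylor expansions at $x$ of two nearby points does not make their remainders cancel. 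The fix is to use hypothesis (\ref{H1})(i) rather than the pointwise expansion at $x$: since $\phi\in C^2(\bar B_{\eta_x})$, one has $\|\nabla\phi\|_{L^\infty(B_{\eta_x})}\le |p_x|+2C_x\eta_x$, and for $r<\eta_x$ the segment joining $x+ry$ and $x+r\omega$ lies in $\bar B_{\eta_x}$, so the mean value theorem gives $|\phi(x+r\omega)-\phi(x+ry)|\le\big(|p_x|+2C_x\eta_x\big)\,r|\omega-y|\le 2\big(|p_x|+2C_x\eta_x\big)\eta_x\alpha$, which is exactly the paper's estimate and produces the stated constant. With that replacement your argument goes through.
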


\begin{proof}
We first estimate the difference:
\begin{equation*}
\begin{split}
\Big|\fint_\epsilon^\infty& \phi(x+ty)\dm - \fint_\epsilon^R
\phi(x+ty)\dm \Big| \\ & \leq \frac{1}{\mu_s(\epsilon, \infty)} \int^\infty_{R} |\phi(x+ty)|\dm
+ \Big|\frac{1}{\mu_s(\epsilon, \infty)} - \frac{1}{\mu_s(\epsilon,
  R)}\Big|  \int_\epsilon^{R} |\phi(x+ty)|\dm \\ &
\leq 2 \big|\frac{\epsilon}{R}\big|^{2s} \cdot \|\phi\|_{L^\infty}.
\end{split}
\end{equation*}
Next, observe that:
\begin{equation*}
\begin{split}
\int_{T^{\epsilon, R, \alpha}(y)} \phi(x+|z|y)\dmN & =
\int_\epsilon^R\phi(x+ty) t^{N-1} \cdot\mbox{area}\big(\big\{|z|=1,~
z\in T^{\epsilon, R, \alpha}\big\}\big) \frac{\mbox{d}t}{t^{N+2s}}
\\ & = \mbox{area}\big(\big\{|z|=1,~
z\in T^{\epsilon, R, \alpha}\big\}\big) \cdot \int_{\epsilon}^R\phi(x+ty)\dm
\end{split}
\end{equation*}
which implies:
$\displaystyle \fint_{T^{\epsilon, R, \alpha}(y)} \phi(x+|z|y)\dmN = \fint_{\epsilon}^R\phi(x+ty)\dm.$
It remains to bound:
\begin{equation*}
\begin{split}
\fint_{T^{\epsilon, R, \alpha}(y)} & |\phi(x+z) -\phi(x+|z|y)|\dmN \leq 
\sup_{z\in T^{\epsilon, R, \alpha}(y)}  |\phi(x+z) -\phi(x+|z|y)| \\ & \leq 
\max\Big\{\|\nabla \phi\|_{L^\infty(B_{\eta_x})} \cdot
2\eta_x\cdot\alpha,\; \omega_\phi(2 R\alpha)\Big\} \\ &
\leq \max\Big\{2\big(|p_x| + 2C_x\eta_x \big)\eta_x\cdot \alpha,\;
(1+2R)\cdot \omega_\phi(\alpha)\Big\}. 
\end{split}
\end{equation*}
This yields the desired estimate and ends the proof.
\end{proof}

From Lemma \ref{lem8}, Remark \ref{kura} and Theorem \ref{jaszczu1}, we directly deduce:

\begin{corollary}\label{cor9}
Assume (\ref{H1}) with $\eta_x\leq 1$, and that $\phi$ is
Lipschitz on $\R^N\setminus \bar B_{\eta_x}$ with Lipschitz constant
$\mbox{Lip}_\phi$.  For every $\epsilon\ll \eta_x$,  we set
$R=\epsilon^{\frac{1}{2s}-1}$ and $\alpha=\epsilon^{4s-\frac{1}{2s}}$. 
Then there holds:
\begin{equation*}
\begin{split}
\Big|\bar\A_\epsilon^o\phi(x) & - \phi(x) - \frac{1}{c_s(1-s)}\epsilon^{2s}\Ls (x)\Big|
 \leq  \epsilon^{4s-1}\big(2\|\phi\|_{L^\infty} +
3\mbox{Lip}_\phi\big) + \frac{s}{1-s}\cdot 2C_x\epsilon^2 
\\ & \qquad\qquad + \left\{\begin{array}{ll} {\displaystyle
 \frac{32}{|p_x|}\;\epsilon^{4s-1}\Big(\frac{8s}{1-s}
 + \big(\eta_x^{-2s}+\frac{2s}{2s-1}\eta_x^{1-2s}\big){\mbox{Lip}_\phi} 
\Big)\;\cdot } & \vspace{1mm}\\ \qquad\quad\ {\displaystyle \cdot
\max\Big\{\frac{2C_x}{1-s}, \frac{(\eta_x^{-2s}+\eta_x^{1-2s})}{2s-1}\mbox{Lip}_\phi\Big\}}
& \mbox{ when } p_x\neq 0 \vspace{1.5mm} \\ 0 & \mbox{ when } p_x= 0.
\end{array}\right.
\end{split}
\end{equation*}
\end{corollary}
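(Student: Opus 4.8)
The plan is to compare $\bar{\A}_\epsilon^o\phi(x)$ with the line average $\A_\epsilon^o\phi(x)$ of Theorem~\ref{jaszczu1} and let Lemma~\ref{lem8} absorb the difference. First I would write, by the triangle inequality,
\begin{equation*}
\begin{split}
\Big|\bar\A_\epsilon^o\phi(x) &- \phi(x) - \tfrac{1}{c_s(1-s)}\epsilon^{2s}\Ls(x)\Big|
\leq \big|\bar\A_\epsilon^o\phi(x) - \A_\epsilon^o\phi(x)\big| \\
&\quad + \Big|\A_\epsilon^o\phi(x) - \phi(x) - \tfrac{1}{c_s(1-s)}\epsilon^{2s}\Ls(x)\Big|,
\end{split}
\end{equation*}
and bound the second summand directly by Theorem~\ref{jaszczu1}. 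For the first summand I would use that $\bar\A_\epsilon^o$ and $\A_\epsilon^o$ are both of the form $\tfrac12\big(\sup_{|y|=1}+\inf_{|y|=1}\big)$ applied to $\fint_{T^{\epsilon,R,\alpha}(y)}\phi(x+\cdot)\dmN$ and $\fint_\epsilon^\infty\phi(x+t\cdot)\dm$ respectively; since $|\sup_y F-\sup_y G|,\,|\inf_y F-\inf_y G|\le\sup_y|F-G|$, the first summand is at most $\sup_{|y|=1}\big|\fint_{T^{\epsilon,R,\alpha}(y)}\phi(x+z)\dmN - \fint_\epsilon^\infty\phi(x+ty)\dm\big|$, which is exactly the quantity estimated by Lemma~\ref{lem8}.

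Next I would check that the choices $R=\epsilon^{\frac1{2s}-1}$, $\alpha=\epsilon^{4s-\frac1{2s}}$ are admissible in Lemma~\ref{lem8} when $\epsilon\ll\eta_x$: since $s\in(\tfrac12,1)$ we have $\tfrac1{2s}-1<0$ and $4s-\tfrac1{2s}>0$, so $R\to\infty$ (hence $R>\max\{\eta_x,1\}$, using $\eta_x\le1$) and $\alpha\to0$ (hence $\alpha<\tfrac12$), while $\epsilon<\eta_x$ holds by hypothesis. With these choices $\big|\tfrac\epsilon R\big|^{2s}=\epsilon^{2s(2-\frac1{2s})}=\epsilon^{4s-1}$ and $R\alpha=\epsilon^{\frac1{2s}-1+4s-\frac1{2s}}=\epsilon^{4s-1}$. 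Using the Lipschitz hypothesis on $\R^N\setminus\bar B_{\eta_x}$, so that $\omega_\phi(a)\le\mbox{Lip}_\phi\,a$, the bound of Lemma~\ref{lem8} becomes $2\epsilon^{4s-1}\|\phi\|_{L^\infty}+\max\big\{2(|p_x|+2C_x\eta_x)\eta_x\,\alpha,\ 3\,\mbox{Lip}_\phi\,\epsilon^{4s-1}\big\}$; since $4s-\tfrac1{2s}>4s-1$ strictly, the first entry of the $\max$ is $o(\epsilon^{4s-1})$ and is dominated by $3\,\mbox{Lip}_\phi\,\epsilon^{4s-1}$ for $\epsilon$ small, so this summand contributes $\epsilon^{4s-1}\big(2\|\phi\|_{L^\infty}+3\,\mbox{Lip}_\phi\big)$.

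For the second summand I would substitute the Lipschitz bounds for $\kappa_\epsilon$ and $A_\epsilon$ from Remark~\ref{kura}(iii) — in particular $A_\epsilon\le\frac{32}{|p_x|}\max\big\{\frac{2C_x\eta_x^{2-2s}}{1-s},\frac{\mbox{Lip}_\phi(\eta_x^{-2s}+\eta_x^{1-2s})}{2s-1}\big\}\epsilon^{2s-1}$ — into the estimate of Theorem~\ref{jaszczu1}. Using $\omega_\phi(A_\epsilon)\le\mbox{Lip}_\phi\,A_\epsilon$, the $p_x\ne0$ part of that error is at most $\epsilon^{2s}A_\epsilon\big(4sC_x\frac{\eta_x^{2-2s}-\epsilon^{2-2s}}{1-s}+(\eta_x^{-2s}+\frac{2s}{2s-1}\eta_x^{1-2s})\mbox{Lip}_\phi\big)$, and then $\eta_x^{2-2s}-\epsilon^{2-2s}\le\eta_x^{2-2s}\le1$ (valid since $\eta_x\le1$) together with the bound on $A_\epsilon$ collapses it to a single $\epsilon^{4s-1}$-term carrying the displayed constants; the $p_x=0$ branch is $0$, and the summand $\frac{s}{1-s}C_x\epsilon^2$ is carried along. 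Adding the two summands yields the asserted inequality. The main difficulty is organizational rather than conceptual: one must track that every error term of Lemma~\ref{lem8} and the factor $\epsilon^{2s}A_\epsilon$ of Theorem~\ref{jaszczu1} is genuinely $O(\epsilon^{4s-1})$ with constants nondecreasing in $C_x$, $1/|p_x|$, $\|\phi\|_{L^\infty}$ and $\mbox{Lip}_\phi$, and one must fix the threshold in ``$\epsilon\ll\eta_x$'' small enough that all the above absorptions — notably $2(|p_x|+2C_x\eta_x)\eta_x\alpha\le 3\,\mbox{Lip}_\phi\,\epsilon^{4s-1}$ — hold simultaneously.
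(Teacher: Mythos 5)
Your proposal is correct and is essentially the paper's own proof: the paper justifies Corollary \ref{cor9} only by the phrase ``From Lemma \ref{lem8}, Remark \ref{kura} and Theorem \ref{jaszczu1}, we directly deduce,'' and you fill in exactly that deduction — triangle inequality, the bound $|\sup_yF-\sup_yG|,|\inf_yF-\inf_yG|\le\sup_y|F-G|$ to invoke Lemma \ref{lem8}, the exponent arithmetic $(\epsilon/R)^{2s}=R\alpha=\epsilon^{4s-1}$, the substitution of Remark \ref{kura}(iii) with $\eta_x\le 1$, and the observation that the threshold in $\epsilon\ll\eta_x$ must absorb the $\alpha$-term. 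The only caveat concerns the last bookkeeping step, which you gloss as ``carrying the displayed constants'': direct substitution actually yields the factor $\tfrac{4sC_x}{1-s}$ (and $\tfrac{s}{1-s}C_x\epsilon^2$) where the corollary prints $\tfrac{8s}{1-s}$ (and $\tfrac{s}{1-s}2C_x\epsilon^2$), a mismatch that traces to the paper's stated constants rather than to any flaw in your argument, since the paper records no computation to reconcile it.
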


\begin{remark}\label{discrete}
Towards the applications in the numerical approximating of solutions to the nonlocal
Dirichlet problem for the operator $\Delta_\infty^s$, one has to consider a
discrete version of the result in Theorem \ref{jaszczu1}. 
To this end, let $\{\theta_i\}_{i=1}^n$ be an equidistributed spherical grid on
$\{|z|=1\}\subset\R^N$; when $N=2$ then $\theta_i=e^{2\pi i/n}$.
Next, for all $x_k$ in the cubical grid $h \mathbb{Z}^N$ define:
$$\bar\A_\epsilon^d \phi(x_k) \doteq \frac{s h^N}{|S_\alpha|\big(\epsilon^{-2s}-R^{-2s}\big)}
\cdot \big(\max_{i=1\ldots n} + \min_{i=1\ldots n} \big)
\sum_{x_j\in  T^{\epsilon, R,\alpha}(\theta_i)\cap h\mathbb{Z}^N}\frac{\phi(x_{k}+x_j)}{|x_j|^{N+2s}},$$ 
where we used that $\mu_s^N(T^{\epsilon, R,\alpha}) = C(N,s) |S_\alpha|\cdot\frac{\epsilon^{-2s}-R^{-2s}}{2s}$,
with $S_\alpha \doteq T^{0,\infty,\alpha}\cap \{|z|=1\}.$

It is clear that for $h$ and $n$ scaling in $\epsilon$ with
sufficiently high positive and negative powers, respectively, the averaging operator $\bar\A_\epsilon^d$
is a discrete approximation of $\mathcal{A}_\epsilon^o$ at the same rate of the error proved in Corollary \ref{cor9}.
The details of this construction as well as its implementation for a
numerical scheme, are left for the future work.
\end{remark}

\subsection{The viscosity framework}\label{sec51}
We observe that our results may be reformulated in the viscosity
setting, which has been used in the results of\cite{MPR10} for the (local) $\infty$- Laplacian.   
The definition of viscosity solutions for the fractional $\infty$-Laplacian
as in \cite[Definition 2.3]{BCF} encodes the hypothesis
(\ref{H1}) which needs to be satisfied by the test functions $\phi$. Following
this lead, one can consider the asymptotic expansions in the viscosity
sense. From now on, the respective averages in
(\ref{MVP1}), (\ref{MVP2}) and (\ref{MVP3}), are generically denoted
by $Average_\epsilon$, with corresponding constants $K>0$ such that:
$$ \frac{K}{\epsilon^{2s}}\big(\emph{Average}_\epsilon \phi-\phi\big)=
\Delta_\infty^s\phi + o(1) \qquad \textup{as $\;\epsilon\to 0+$.} $$

\begin{definition} \label{def:ViscMVP}
Let $\Omega\subset \R^N$ be open and let $f:\Omega\to\R$. A bounded
upper (resp. lower) semicontinuous function $u:\R^N\to\R$ is a viscosity sub-solution (resp. super-solution) of: 
\begin{equation}\label{vs}
\frac{K}{\epsilon^{2s}}\big(\emph{Average}_\epsilon u-u\big)=f +
o(1) \quad \text{in $\Omega\qquad $ as $\;\epsilon\to 0+$},
\end{equation}
provided that the following holds.  For every $x\in \Omega$, $r>0$, and $\psi\in C^2(\bar{B}_r(x))$ such that:
\[
\psi(x)=u(x) \quad \text{and} \quad \psi(y)>u(y) \quad
\text{(resp. $\psi(y)<u(y)$)} \quad \text{for all $y\in
  \bar{B}_{r}(x)\setminus\{x\}$,}   
\]
we have:
\[
\frac{K}{\epsilon^{2s}}\big(\emph{Average}_\epsilon \phi(x)-\phi(x)\big)\geq
f(x)+ o(1)\quad
\text{\Big(resp. $\frac{K}{\epsilon^{2s}}\big(\emph{Average}_\epsilon \phi(x)-\phi(x)\big)\leq f(x) + o(1)$\Big)}, 
\]
where $\phi \doteq \mathds{1}_{\bar B_r(x)}\psi + \mathds{1}_{\R^N\setminus \bar B_r(x)} u$.
When $u$ is both a viscosity sub- and super-solution, it is
a  viscosity solution of (\ref{vs}) (i.e. it satisfies the asymptotic expansion in the viscosity sense).
\end{definition}

The following follows from either of Theorems
\ref{jaszczu1}, \ref{jaszczu2} or Corollary \ref{cor9} in a standard fashion: 

\begin{theorem}\label{thm:equivalenceviscosity1}
Let $\Omega\subset \R^N$ be open, $u:\R^N\to\R$ be bounded and uniformly
continuous, and $f:\Omega\to\R$. Then the following are equivalent: 
\begin{enumerate}[leftmargin=7mm]
\item[(i)] $u$ is a viscosity solution of: $\Delta_\infty^s u=f$ in $\Omega$,\vspace{1mm}
\item[(ii)] $u$ satisfies: $\frac{K}{\epsilon^{2s}}\big(\text{Average}_\epsilon u-u\big)=f+
  o(1)$ in $\Omega$ as $\epsilon\to0+$, in the viscosity sense.
\end{enumerate}
\end{theorem}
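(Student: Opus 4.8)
The plan is to prove Theorem \ref{thm:equivalenceviscosity1} by a standard viscosity-solution argument, transferring the pointwise asymptotic expansions from Theorems \ref{jaszczu1}, \ref{jaszczu2} (or Corollary \ref{cor9}) to the language of test functions. First I would recall that, by \cite[Definition 2.3]{BCF}, $u$ is a viscosity solution of $\Delta_\infty^s u = f$ in $\Omega$ precisely when for every $x\in\Omega$ and every $\psi\in C^2(\bar B_r(x))$ touching $u$ from above (resp.\ below) at $x$ with $\psi>u$ (resp.\ $\psi<u$) on $\bar B_r(x)\setminus\{x\}$, the function $\phi\doteq \mathds{1}_{\bar B_r(x)}\psi + \mathds{1}_{\R^N\setminus\bar B_r(x)}u$ satisfies $\Delta_\infty^s\phi(x)\ge f(x)$ (resp.\ $\le f(x)$). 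The point to check is that such $\phi$ verifies hypothesis (\ref{H1}) at $x$: part (i) holds because $\phi=\psi\in C^2(\bar B_{\eta_x})$ on any ball $B_{\eta_x}\subset B_r(x)$ with $\eta_x<r$, and part (ii) holds because $\phi=u$ outside $\bar B_r(x)$ and $u$ is bounded and uniformly continuous, so its modulus of continuity controls $\omega_\phi$. Thus Theorems \ref{jaszczu1}, \ref{jaszczu2} and Corollary \ref{cor9} are all applicable to this very $\phi$, and they give $\frac{K}{\epsilon^{2s}}\big(\mathrm{Average}_\epsilon\phi(x)-\phi(x)\big)=\Delta_\infty^s\phi(x)+o(1)$ as $\epsilon\to0+$, since in each case the constant $K$ was chosen exactly so that $\frac{K}{\epsilon^{2s}}\cdot(\text{the }\epsilon^{2s}\text{-coefficient})=\Delta_\infty^s\phi(x)$, and the error terms there are $o(1)$ for fixed $x$ (using Remark \ref{kura}, resp.\ Remark \ref{piesio}, to see $A_\epsilon\to0$ and hence $\omega_\phi(A_\epsilon)\to0$).

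Next I would carry out the two implications. For (i)$\Rightarrow$(ii): fix $x$, $r$, $\psi$ as in Definition \ref{def:ViscMVP}, form $\phi$, and apply the pointwise expansion to get $\frac{K}{\epsilon^{2s}}\big(\mathrm{Average}_\epsilon\phi(x)-\phi(x)\big)=\Delta_\infty^s\phi(x)+o(1)$; by the viscosity sub-solution property, $\Delta_\infty^s\phi(x)\ge f(x)$, whence $\frac{K}{\epsilon^{2s}}\big(\mathrm{Average}_\epsilon\phi(x)-\phi(x)\big)\ge f(x)+o(1)$, which is exactly the sub-solution condition in Definition \ref{def:ViscMVP}; the super-solution case is symmetric. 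For (ii)$\Rightarrow$(i): again fix $x$, $r$, $\psi$, form $\phi$, and now use (ii) to get $\frac{K}{\epsilon^{2s}}\big(\mathrm{Average}_\epsilon\phi(x)-\phi(x)\big)\ge f(x)+o(1)$; combining with the pointwise expansion $\frac{K}{\epsilon^{2s}}\big(\mathrm{Average}_\epsilon\phi(x)-\phi(x)\big)=\Delta_\infty^s\phi(x)+o(1)$ and letting $\epsilon\to0+$ yields $\Delta_\infty^s\phi(x)\ge f(x)$, i.e.\ the viscosity sub-solution inequality; the other inequality is analogous. Since $u$ is assumed bounded and uniformly continuous (so in particular both upper and lower semicontinuous), the sub- and super-solution statements combine to the claimed equivalence.

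The only genuinely non-routine point, and the one I would write most carefully, is verifying that the glued function $\phi=\mathds{1}_{\bar B_r(x)}\psi+\mathds{1}_{\R^N\setminus\bar B_r(x)}u$ really satisfies (\ref{H1}) so that the quantitative theorems apply \emph{with error terms that are $o(1)$ as $\epsilon\to0+$ for this fixed $x$}. Concretely: one must choose $\eta_x<r$ (so $B_{\eta_x}\subset B_r(x)$ and $\phi$ is $C^2$ there with $p_x=\nabla\psi(x)$), note that the behaviour of $\phi$ on $\R^N\setminus\bar B_{\eta_x}$ is a combination of the smooth $\psi$ on the annulus $\bar B_r(x)\setminus\bar B_{\eta_x}$ and the uniformly continuous $u$ outside, so $\phi$ is bounded and uniformly continuous there with modulus $\omega_\phi$ controlled by $\omega_u$ plus a Lipschitz term from $\psi$; hence Remark \ref{kura}(i) (or the Lipschitz case in Remark \ref{kura}(iii)) gives $\kappa_\epsilon,A_\epsilon\to0$, so $\omega_\phi(A_\epsilon)\to 0$, and the entire right-hand side in Theorem \ref{jaszczu1} is $o(1)$; similarly for Theorem \ref{jaszczu2} via Remark \ref{piesio}, and for Corollary \ref{cor9}. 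A secondary subtlety is the case $p_x=\nabla\psi(x)=0$: then Theorem \ref{jaszczu1} still applies directly (its error bound is simply $\frac{s}{1-s}C_x\epsilon^2=o(1)$), which is why the statement is phrased in terms of the averages $\mathcal{A}_\epsilon^o$ / $\bar{\mathcal{A}}_\epsilon^o$ that do not require $p_x\ne0$. Once these verifications are in place, the equivalence is immediate as sketched above.
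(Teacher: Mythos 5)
Your overall route --- glue $\phi=\mathds{1}_{\bar B_r(x)}\psi+\mathds{1}_{\R^N\setminus\bar B_r(x)}u$, apply the quantitative expansions of Theorems \ref{jaszczu1}, \ref{jaszczu2} or Corollary \ref{cor9} to this $\phi$ at $x$, and transfer the inequality in both directions --- is exactly the argument the paper intends (the paper gives no details beyond saying the theorem ``follows trivially'' from those results). However, the step you yourself single out as the crucial one is not correct as written. The glued function $\phi$ is in general \emph{not} uniformly continuous on $\R^N\setminus\bar B_{\eta_x}$: since $\psi>u$ strictly on $\partial B_r(x)$, $\phi$ has a genuine jump across that sphere, so the modulus $\omega_\phi$ of hypothesis (\ref{H1})(ii) does not tend to $0$ as $a\to0+$. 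Consequently $\omega_\phi(A_\epsilon)\not\to0$, and when $p_x\neq0$ the error bounds of Theorems \ref{jaszczu1}, \ref{jaszczu2} and Corollary \ref{cor9}, quoted verbatim, only yield an $O(1)$ (not $o(1)$) discrepancy after division by $\epsilon^{2s}$. Your claim that ``$\omega_\phi$ is controlled by $\omega_u$ plus a Lipschitz term from $\psi$'' is therefore false for the test functions of Definition \ref{def:ViscMVP}, which is precisely the class you need.

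The gap is repairable, but the repair requires an observation you did not make: in every place where $\omega_\phi$ is actually used (the tail integrals over $t\ge\eta_x$ in the proof of Proposition \ref{kokon}, and the bound on $|\phi(x+z)-\phi(x+|z|y)|$ in Lemma \ref{lem8}), the two points being compared are equidistant from $x$ (both lie on a sphere $\partial B_t(x)$, resp.\ $|z|=\big||z|y\big|$), hence they never straddle $\partial B_r(x)$: either both carry the value of $\psi$ (which is Lipschitz on $\bar B_r(x)$) or both carry the value of $u$. So the proofs, though not the literal statements, apply to the glued $\phi$ with $\omega_\phi(a)$ replaced by $\max\{\mathrm{Lip}_\psi\, a,\ \omega_u(a)\}$, which does vanish as $a\to0+$, restoring the $o(1)$ expansion and with it both implications exactly as you sketched. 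Without this (or an equivalent device) the appeal to the theorems ``as stated'' does not go through. A minor further inaccuracy: the generic $\mathrm{Average}_\epsilon$ in the theorem also includes $\A_\epsilon$ of (\ref{MVP2}), so the statement is not ``phrased in terms of $\A_\epsilon^o$/$\bar\A_\epsilon^o$'' only; for (\ref{MVP2}) one must invoke Theorem \ref{jaszczu2}, which covers only $p_x\neq0$, a restriction inherited from the paper itself.
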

We refer the reader to \cite{BS2} for similar statements in the
context of other averages for $\Delta_\infty^s$.

\section*{Acknowledgements}
This research was started when M. Lewicka visited Norwegian University of
Science and Technology. Part of this research was then carried out while
F. del Teso and J. Endal visited the University of Pittsburgh. We want
to thank both institutions for their hospitality.  

\end{document}